\documentclass[10pt]{article}
\usepackage{amsfonts}
\usepackage{amssymb}
\usepackage{amsmath}
\usepackage{graphicx}
\usepackage{t1enc}
\usepackage{microtype} 
\usepackage{titling}
\usepackage[latin1]{inputenc}
\usepackage[english]{babel}
\usepackage[T1]{fontenc}
\usepackage[all]{xy}
\usepackage{latexsym}
\usepackage{t1enc}
\setlength{\paperwidth}{25cm} \setlength{\paperheight}{29.7cm}
\setlength{\evensidemargin}{-1cm} \setlength{\oddsidemargin}{0cm}
\setlength{\topmargin}{-2.5cm} \setlength{\headsep}{0.7cm}
\setlength{\headheight}{1cm} \setlength{\textheight}{25cm}
\setlength{\textwidth}{17cm}
\linespread{1.44}
\usepackage{epsfig,pslatex,colortbl}
\usepackage[hmarginratio=1:1,top=32mm,columnsep=20pt]{geometry} 
\usepackage{hyperref}
\usepackage{amsthm}
\usepackage{url}

\usepackage{array,multirow,makecell}
\setcellgapes{1pt}
\makegapedcells
\newcolumntype{R}[1]{>{\raggedleft\arraybackslash }b{#1}}
\newcolumntype{L}[1]{>{\raggedright\arraybackslash }b{#1}}
\newcolumntype{C}[1]{>{\centering\arraybackslash }b{#1}}
\newcolumntype{t}[1]{>{\arraybackslash }b{#1}}

\newcommand {\junk}[1]{}

\vfuzz2pt 
\hfuzz2pt 
\newtheorem{thm}{Theorem}[section]
\newtheorem{cor}[thm]{Corollary}
\newtheorem{lem}[thm]{Lemma}
\newtheorem{prop}[thm]{Proposition}

\newtheorem{defn}[thm]{Definition}


\newtheorem{theorem}{Theorem}[section]

\newtheorem{proposition}[theorem]{Proposition}

\newtheorem{example}[theorem]{Example}


\let \bb=\mathbb{}

\def\rm{\textrm{}}
\def\fudge{\mathchoice{}{}{\mkern.5mu}{\mkern.8mu}}
\def\bbc#1#2{{\rm \mkern#2mu\vbar\mkern-#2mu#1}}
\def\bbb#1{{\rm I\mkern-3.5mu #1}}
\def\bba#1#2{{\rm #1\mkern-#2mu\fudge #1}}
\def\bb#1{{\count4=`#1 \advance\count4by-64 \ifcase\count4\or\bba
A{11.5}\or \bbb B\or\bbc C{5}\or\bbb D\or\bbb E\or\bbb F \or\bbc
G{5}\or\bbb H\or \bbb I\or\bbc J{3}\or\bbb K\or\bbb L \or\bbb
M\or\bbb N\or\bbc O{5} \or \bbb P\or\bbc C{5}\or\bbb B\or\bbc
S{4.2}\or\bba T{10.5}\or\bbc U{5}\or \bba V{12}\or\bba
W{16.5}\or\bba X{11}\or\bba Y{11.7}\or\bba Z{7.5}\fi}}

\usepackage{amssymb}
\usepackage{relsize}
\usepackage{layout}

\begin{document}
\title{\vspace{-15mm}\fontsize{17pt}{10pt}\selectfont\textbf Cohomology and Deformations
of $n$-Lie algebra 
morphisms} 
\author{
\textsc{Anja Arfa\thanks{arfaanja.mail@gmail.com}}\\[2mm] 
\normalsize Universit\'e de Sfax (Tunisia) \\
\normalsize Facult\'{e} des sciences \\ 
\vspace{-5mm}
\and
\textsc{Nizar Ben Fraj\thanks{benfraj\_nizar@yahoo.fr}}\\[2mm] 
\normalsize Universit\'{e} de Carthage (Tunisia)\\
\normalsize Institut pr\'eparatoire aux \'etudes d'ing\'enieur de Nabeul\\ 
\vspace{-5mm}
\and
\textsc{Abdenacer Makhlouf\thanks{abdenacer.makhlouf@uha.fr}}\\[2mm] 
\normalsize Universit\'e de Haute Alsace (France) \\
\normalsize IRIMAS - d\'epartement de math\'ematiques\\ 
\vspace{-5mm}
}
\maketitle

\begin{abstract}
The study of $n$-Lie algebras which are natural generalization of Lie algebras is motivated by Nambu Mechanics and recent developments in String Theory and M-branes.  
The purpose of this paper is to define  cohomology complexes and study  deformation theory
of $n$-Lie algebra morphisms.
We discuss  infinitesimal deformations, equivalent deformations and obstructions. Moreover, we provide various examples.
\end{abstract}
\bigskip
\thispagestyle{empty}

\section*{Introduction}

Ternary operations and more generally $n$-ary operations appeared for the first time associated with the cubic matrices studied  by A. Cayley in  the XIXth century. Since then, multioperators rings and algebras have been studied in various modern mathematical works. The interest on  generalization of ordinary Lie algebras was motivated by their connection to Nambu Mechanics \cite{Y} which allows to consider more than one hamiltonian and also by more recent applications in String Theory and M-branes. For more application in Physics see  \cite{SO}.
The algebraic study of $n$-Lie algebras or $n$-ary Nambu-Lie
algebras was introduced first  by Filippov in  \cite{V} and completed by Kasymov in \cite{S}.
A $n$-Lie algebra is defined by a $n$-ary multilinear operation which is skew-symmetric and satisfies Filippov-Jacobi identity. In particular, for $n=3$ this identity is
$$[x_{1},x_{2},[x_{3},x_{4},x_{5}]]=[[x_{1},x_{2},x_{3}],x_{4},x_{5}]+[x_{3},[x_{1},x_{2},x_{4}],x_{5}]+
[x_{3},x_{4},[x_{1},x_{4},[x_{1},x_{2},x_{5}]].$$

Many proprieties on these types of algebras are treated,
one cites for example solvability, nilpotency, central extension.
A study of $(n+1)$-Lie algebras constructed
from $n$-Lie algebras and generalized trace maps was discussed in \cite{AMSS}. In \cite{KA}, the authors established the relationships
between the properties of a $n$-Lie algebra and its induced $(n+1)$-Lie algebra.

This aim of this paper is to construct a cohomology complex  of $n$-Lie 
algebra  morphisms. For that, we define a cohomology
structure of these algebras with values in a module.
The cohomology of $n$-Lie algebras is a generalization of the Chevalley-Eilenberg Lie algebras cohomology.
We  refer to \cite{J,GP,T} for the cohomology of $n$-Lie
algebras and for representation theory which leads us to describe the deformation cohomology
complex $(C^{*}(\phi,\phi),\delta)$ for $n$-Lie algebra morphisms. Notice that  Daletskii and Takhtajan showed, in \cite{DT}, that the cohomology
of $n$-Lie algebras can be derived from Leibniz algebras cohomology.
Furthermore, we are interested  in this paper by  the deformations of  $n$-Lie 
algebra morphisms.
A cohomology complex adapted to the study of deformations was introduced by Fr\'{e}gier \cite{f} in the case of Lie algebra morphisms.
Deformations of $n$-Lie algebras has been discussed in terms of Chevalley-Eilenberg cohomology in various papers, a survey is available  in \cite{A}. In this paper, we introduce and study for $n$-Lie algebra morphisms  
 the concepts of  infinitesimal and equivalent deformations as well as obstructions to extend a given fixed order deformation. We denote by $\mathcal{N}$ and $\mathcal{N}'$  two $n$-Lie algebras.
Equivalence classes of infinitesimal deformations of $n$-Lie 
algebras are characterized by the
cohomology groups $H^{2}(\mathcal{N},\mathcal{N})$ and by $H^{1}(\mathcal{N},\mathcal{N}')$ withe respect to 
a  morphism $\phi$ between $\mathcal{N}$ and $\mathcal{N}'$.

The paper is organized as follows. In Section 1, we review some basics
about $n$-Lie algebras and  their adjoint representation via a morphism $\phi$. Moreover, we recall the cohomology of $n$-Lie algebras with values in the algebra.
In Section $2$, we define the cohomology of $n$-Lie algebras with values in an adjoint module.
In Section $3$, we define explicitly a cochain complex with a coboundary operator  and the $n$-cochains
module $C^{n}(\phi,\phi)$ providing a cohomology of $n$-Lie algebra morphisms. Section $4$ focuses on the deformation theory of $n$-Lie algebra morphisms.
Lastly, in Section $5$, we deal with some examples. We compute the cohomology and provide  examples of deformation of 3-Lie algebra morphisms.
\section{Basics}
In this  section, we summarize the main definitions about $n$-Lie algebras, representations and cohomology of $n$-Lie algebras with values in the algebra itself. \\ In the sequel all vector spaces are considered over an algebraically closed field $\mathbb{K}$ of characteristic 0. 

\begin{defn}
A $n$-Lie
algebra is a  vector space $\mathcal{N}$ together with a $n$-ary multilinear operation
$[\cdot,\ldots,\cdot]$ satisfying the following identities:
\begin{eqnarray}\label{eq1}
[x_{1},\ldots,x_{n}]&=&(-1)^{\tau(\sigma)}[x_{\sigma(1)},\ldots,x_{\sigma(n)}],\\
\label{eq2}
 [x_{1},\ldots,x_{n-1},[y_{1},\ldots,y_{n}]]&=&\sum\limits_{i=1}^{n}[y_{1},\ldots,y_{i-1},[x_{1},\ldots,x_{n-1},y_{i}],y_{i+1},\ldots,y_{n}],
\end{eqnarray}
\noindent where $\sigma$ runs over the symmetric group $S_{n}$ and the number $\tau(\sigma)$ equals $0$ or $1$ depending on the
parity of the permutation $\sigma$.
We call condition \eqref{eq2} Nambu identity, it is also called fundamental identity or Filippov identity.
\end{defn}
The map
\begin{equation}\label{eq3}
  ad(x_{1},\ldots,x_{n-1})(x_{n})=[x_{1},\ldots,x_{n-1},x_{n}]\quad for~x_{n}\in \mathcal{N}
\end{equation}
is referred  as a left multiplication defined by elements $x_{1},\ldots,x_{n-1}\in \mathcal{N}$.
The Nambu  identity \eqref{eq2} may be written as 
$$ad(x_{1},\ldots,x_{n-1})([y_{1},\ldots,y_{n}])=\sum\limits_{i=1}^{n}[y_{1},\ldots,y_{i-1},ad(x_{1},\ldots,x_{n-1})(y_{i}),y_{i+1},\ldots,y_{n}].$$
The identity \eqref{eq2}, for $n=2$, corresponds to the Jacobi identity.
\begin{defn}
Let $(\mathcal{N},[\cdot,\ldots,\cdot])$ and $(\mathcal{N}',[\cdot,\ldots,\cdot]')$ be two $n$-Lie algebras.
A linear map $f:\mathcal{N}\rightarrow\mathcal{N}'$ is a $n$-Lie algebra morphism if it satisfies
$$f([x_{1},\ldots,x_{n}])=[f(x_{1}),\ldots,f(x_{n})]'.$$
\end{defn}
The concept of representation of Lie algebras is generalized to $n$-Lie algebras in a natural was as follows.
\begin{defn}
A representation of a $n$-Lie algebra $(\mathcal{N},[\cdot,\ldots,\cdot])$ on a vector space
$V$ is a skew-symmetric multilinear map $\rho:\mathcal{N}^{n-1}\rightarrow End(V)$ satisfying
the identities
\begin{eqnarray}\label{eq4}
&\rho(x_{1},\ldots,x_{n-1})\circ\rho(y_{1},\ldots,y_{n-1})-\rho(y_{1},\ldots,y_{n-1})\circ\rho(x_{1},\ldots,x_{n-1})
=\sum\limits_{i=1}^{n-1}\rho(y_{1},\ldots,y_{i-1},[x_{1},\ldots,x_{n-1},y_{i}],y_{i+1},\ldots,y_{n-1}),\\
&\rho([x_{1},\ldots,x_{n-1},x_{n}],y_{2},\ldots,y_{n-1})
=\sum\limits_{i=1}^{n}\rho(x_{1},\ldots,\widehat{x}_{i},\ldots,x_{n})\circ\rho(x_{i},y_{2},\ldots,y_{n-1}),
\end{eqnarray}
for all $x_{i},y_{j}\in \mathcal{N}, 1\leq i \leq n, 1\leq j \leq n-1$.
\end{defn}
\begin{example}
Let $(\mathcal{N},[\cdot,\ldots,\cdot])$ be a $n$-Lie algebra. The map $ad$ defined in \eqref{eq3} is a
representation. It is called adjoint representation.
\end{example}

Let $(\mathcal{N},[\cdot,\ldots,\cdot])$,  $(\mathcal{N}',[\cdot,\ldots,\cdot]')$ be two
$n$-Lie algebras and $\phi:\mathcal{N}\rightarrow \mathcal{N}'$ be a $n$-Lie algebra morphism.
Let $\wedge^{n-1}\mathcal{N}$ be the set of elements $x_{1}\wedge\cdots \wedge x_{n-1}$ that are skew-symmetric
in their arguments. 
On $\wedge^{n-1}\mathcal{N}$, for $x=x_{1}\wedge\cdots \wedge x_{n-1}\in \wedge^{n-1}\mathcal{N}$, $y=y_{1}\wedge\cdots\wedge y_{n-1}\in\wedge^{n-1}\mathcal{N}, ~z\in  \mathcal{N}'$, we define:
\begin{itemize}
  \item The linear map $L':\wedge^{n-1}\mathcal{N}\wedge  \mathcal{N}'\rightarrow  \mathcal{N}'$~ by~
$L'(x).z=[\phi(x_{1}),\ldots,\phi(x_{n-1}),z]'$.  
  \item The bilinear map $[~,~]:\wedge^{n-1}\mathcal{N}\times\wedge^{n-1}\mathcal{N}\rightarrow \wedge^{n-1}\mathcal{N}$
~by~ $[x,y]=L(x)\bullet y=\sum\limits_{i=1}^{n-1}(y_{1},\ldots,ad(x).y_{i},\ldots,y_{n-1})$.
\item The map $\bar{\phi}:\wedge^{n-1}\mathcal{N}\rightarrow\wedge^{n-1}\mathcal{N}$~ by~ 
$\bar{\phi}(x)=\phi(x_1)\wedge\ldots\wedge\phi(x_{n-1})$.
\end{itemize}
We denote by $\mathcal{L}(\mathcal{N})$ the space $ \wedge^{n-1}\mathcal{N}$ and we call it the fundamental set.
\begin{lem}\label{lem1}


The map $L'$ satisfies
\begin{equation}\label{lem2}
  L'([x,y]).z=L'(x).( L'(y).z)-L'(y).( L'(x).z)
\end{equation}
for all $x,y\in\mathcal{L}(\mathcal{N})$, $z\in \mathcal{N}'$. Then, $\mathcal{N}'$ is a $\mathcal{N}$-module called adjoint representation of $\mathcal{N}$ via $\phi$.
\end{lem}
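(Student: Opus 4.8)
The plan is to verify the identity \eqref{lem2} by a direct computation, unwinding the definitions of $L'$ and of the bracket $[x,y]$ on $\mathcal{L}(\mathcal{N})=\wedge^{n-1}\mathcal{N}$, and then reading off the module structure. Write $x=x_{1}\wedge\cdots\wedge x_{n-1}$ and $y=y_{1}\wedge\cdots\wedge y_{n-1}$. By definition $[x,y]=\sum_{i=1}^{n-1}(y_{1},\ldots,y_{i-1},[x_{1},\ldots,x_{n-1},y_{i}],y_{i+1},\ldots,y_{n-1})$, so by linearity of $L'$ in its wedge-argument,
$$L'([x,y]).z=\sum_{i=1}^{n-1}\big[\phi(y_{1}),\ldots,\phi(y_{i-1}),\phi([x_{1},\ldots,x_{n-1},y_{i}]),\phi(y_{i+1}),\ldots,\phi(y_{n-1}),z\big]'.$$
Here is the first key point: since $\phi$ is an $n$-Lie algebra morphism, $\phi([x_{1},\ldots,x_{n-1},y_{i}])=[\phi(x_{1}),\ldots,\phi(x_{n-1}),\phi(y_{i})]'$, so the right-hand side becomes
$$\sum_{i=1}^{n-1}\big[\phi(y_{1}),\ldots,[\phi(x_{1}),\ldots,\phi(x_{n-1}),\phi(y_{i})]',\ldots,\phi(y_{n-1}),z\big]'.$$

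Now I would invoke the Nambu identity \eqref{eq2} in $\mathcal{N}'$, applied with the $n-1$ "outer" elements $\phi(x_{1}),\ldots,\phi(x_{n-1})$ acting on the bracket $[\phi(y_{1}),\ldots,\phi(y_{n-1}),z]'$ of the $n$ elements $\phi(y_{1}),\ldots,\phi(y_{n-1}),z$. That identity expands $[\phi(x_{1}),\ldots,\phi(x_{n-1}),[\phi(y_{1}),\ldots,\phi(y_{n-1}),z]']'$ as a sum over the $n$ slots of $[\phi(y_{1}),\ldots,\phi(y_{n-1}),z]'$: the first $n-1$ of these summands are exactly the $i=1,\ldots,n-1$ terms displayed above, while the last summand (the one where $ad(\phi(x_{1}),\ldots,\phi(x_{n-1}))$ hits $z$) is $[\phi(y_{1}),\ldots,\phi(y_{n-1}),[\phi(x_{1}),\ldots,\phi(x_{n-1}),z]']'$. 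Rearranging, the sum $\sum_{i=1}^{n-1}(\cdots)$ equals $L'(x).(L'(y).z)-L'(y).(L'(x).z)$, since $[\phi(x_{1}),\ldots,\phi(x_{n-1}),[\phi(y_{1}),\ldots,\phi(y_{n-1}),z]']'=L'(x).(L'(y).z)$ and the leftover term is $L'(y).(L'(x).z)$. This is precisely \eqref{lem2}.

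Finally, $L'$ is manifestly skew-symmetric and multilinear in the entries $x_{1},\ldots,x_{n-1}$ (it is built from the skew-symmetric bracket of $\mathcal{N}'$), hence descends to a well-defined linear map $\wedge^{n-1}\mathcal{N}\to \operatorname{End}(\mathcal{N}')$; together with \eqref{lem2} this says $x\mapsto L'(x)$ is a Lie-algebra-type action, so $\mathcal{N}'$ is a module over $\mathcal{N}$ via $\phi$ — the adjoint representation of $\mathcal{N}$ via $\phi$ in the sense of the representation axiom \eqref{eq4} with $V=\mathcal{N}'$. I would also remark that one should check the second representation axiom, the one involving $\rho([x_{1},\ldots,x_{n}],y_{2},\ldots,y_{n-1})$; this again follows by applying the morphism property of $\phi$ and then the corresponding identity \eqref{eq2} (in the form of the $ad$-relation) inside $\mathcal{N}'$.

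The only genuinely delicate point is bookkeeping: matching the index ranges when passing between the $(n-1)$-fold sum coming from the bracket $[x,y]$ and the $n$-fold sum produced by the Nambu identity, and correctly identifying the "missing" $n$-th term as $L'(y).(L'(x).z)$. Once the indices are aligned there is no real content beyond the morphism property of $\phi$ and the Nambu identity in $\mathcal{N}'$, so I expect that to be the main (and only) obstacle.
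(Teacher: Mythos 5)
Your proposal is correct and follows essentially the same route as the paper: both sides are computed directly, the morphism property of $\phi$ converts $\phi([x_{1},\ldots,x_{n-1},y_{i}])$ into $[\phi(x_{1}),\ldots,\phi(x_{n-1}),\phi(y_{i})]'$, and the Nambu identity in $\mathcal{N}'$ identifies the resulting sum with $L'(x).(L'(y).z)-L'(y).(L'(x).z)$. Your write-up is in fact slightly more explicit than the paper's about where the Nambu identity enters and about the leftover $n$-th term being $L'(y).(L'(x).z)$.
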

\begin{proof} 
We have
\begin{eqnarray*}
  L'([x,y])\cdot z&=&L'\left(\sum\limits_{i=1}^{n-1}(y_{1},\ldots,ad(x).y_{i},\ldots,y_{n-1})\right)\cdot z\\
  &=&\sum\limits_{i=1}^{n-1}
  [\phi(y_{1}),\ldots,\phi\circ ad(x).y_{i},\ldots,\phi(y_{n-1}),z]'\\
  &=&\sum\limits_{i=1}^{n-1}[\phi(y_{1}),\ldots,\phi\circ[x_{1},\ldots,x_{n-1},y_{i}],\ldots,\phi(y_{n-1}),z]'.\\
  L'(x).(L'(y).z)-L'(y).(L'(x).z)&=&\sum\limits_{i=1}^{n-1}[\phi(y_{1}),\ldots,[\phi(x_{1}),\ldots,\phi(x_{n-1}),\phi(y_{i})]'
  ,\ldots,\phi(y_{n-1}),z]'\\
  &=&\sum\limits_{i=1}^{n-1}[\phi(y_{1}),\ldots,\phi\circ([x_{1},\ldots,x_{n-1},y_{i}])
  ,\ldots,\phi(y_{n-1}),z]'.
\end{eqnarray*}
Thus, the result holds.
\end{proof}
Moreover, we have the following fundamental result, providing a representation of a $n$-Lie algebra by a Leibniz algebra. Recall that a Leibniz algebra is a vector space with a binary bracket satisfying the following identity.
\begin{equation}\label{LeibnizIdentity}
[[X, Y], Z]= [[X, Z], Y] + [X, [Y, Z]]. 
\end{equation}
\begin{proposition}
Let $(\mathcal{N}, [\cdot,\cdots,\cdot])$ be a $n$-Lie algebra, then $\mathcal{L}(\mathcal{N})= (\mathcal{L}(\mathcal{N}), [\cdot,\cdot ] )$ is a Leibniz algebra with respect to the bracket
$$[x_1, \cdots, x_{n-1}, y_1, \cdots, y_{n-1}]= \sum_{i=1}^{n-1} x_1 \wedge \cdots \wedge [x_i, y_1, \cdots, y_{n-1}]\wedge \cdots \wedge x_{n-1},$$
for all $X=(x_1, \cdots, x_{n-1})$ and $Y= (y_1, \cdots, y_{n-1})$ in $\mathcal{L}(\mathcal{N})$.
\end{proposition}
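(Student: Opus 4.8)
The plan is to put the bracket into operator form and reduce the Leibniz identity to a commutator identity for adjoint operators. For $Y=(y_{1},\dots,y_{n-1})\in\mathcal{L}(\mathcal{N})$ set $\delta_{Y}:=ad(y_{1},\dots,y_{n-1})\in\mathrm{End}(\mathcal{N})$; by the skew-symmetry \eqref{eq1} the assignment $Y\mapsto\delta_{Y}$ is well defined and linear on $\wedge^{n-1}\mathcal{N}$. Let $\widehat{\delta_{Y}}$ denote the unique derivation of the exterior algebra $\wedge^{\bullet}\mathcal{N}$ with $\widehat{\delta_{Y}}|_{\mathcal{N}}=\delta_{Y}$, restricted to $\wedge^{n-1}\mathcal{N}$, so that $\widehat{\delta_{Y}}(x_{1}\wedge\cdots\wedge x_{n-1})=\sum_{i=1}^{n-1}x_{1}\wedge\cdots\wedge\delta_{Y}(x_{i})\wedge\cdots\wedge x_{n-1}$. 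Up to the fixed sign $(-1)^{n-1}$ relating $[x_{i},y_{1},\dots,y_{n-1}]$ to $\delta_{Y}(x_{i})=[y_{1},\dots,y_{n-1},x_{i}]$, the bracket of the Proposition is exactly $[X,Y]=\widehat{\delta_{Y}}(X)$; since the Leibniz identity \eqref{LeibnizIdentity} is insensitive to rescaling the bracket by a nonzero scalar, I may and will work with $[X,Y]=\widehat{\delta_{Y}}(X)$.

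I would then isolate two ingredients. First, the derivation-extension map $f\mapsto\widehat{f}$ is a morphism of Lie algebras, i.e.\ $\widehat{f}\circ\widehat{g}-\widehat{g}\circ\widehat{f}=\widehat{f\circ g-g\circ f}$ on $\wedge^{n-1}\mathcal{N}$: both sides are (degree-$0$) derivations of $\wedge^{\bullet}\mathcal{N}$ and they agree on degree-one elements, where each equals $fg-gf$, hence they coincide on the whole exterior algebra. Second, on $\mathcal{N}$ one has the commutator formula
\[
\delta_{Z}\circ\delta_{Y}-\delta_{Y}\circ\delta_{Z}=\delta_{\widehat{\delta_{Z}}(Y)},
\]
which is nothing but the Nambu identity \eqref{eq2}: applying $\delta_{Z}=ad(z_{1},\dots,z_{n-1})$ to $\delta_{Y}(w)=[y_{1},\dots,y_{n-1},w]$ and using \eqref{eq2} to distribute $\delta_{Z}$ over the $n$-ary bracket produces the term $\delta_{Y}(\delta_{Z}(w))$ together with $\sum_{i=1}^{n-1}[y_{1},\dots,\delta_{Z}(y_{i}),\dots,y_{n-1},w]$, and the latter sum equals $\delta_{\widehat{\delta_{Z}}(Y)}(w)$ by linearity of $Y\mapsto\delta_{Y}$. (Equivalently, this is identity \eqref{eq4} specialised to the adjoint representation.)

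Finally I would assemble the pieces. Using $[X,Y]=\widehat{\delta_{Y}}(X)$ twice,
\[
[[X,Y],Z]-[[X,Z],Y]=\widehat{\delta_{Z}}\bigl(\widehat{\delta_{Y}}(X)\bigr)-\widehat{\delta_{Y}}\bigl(\widehat{\delta_{Z}}(X)\bigr)=\bigl(\widehat{\delta_{Z}}\circ\widehat{\delta_{Y}}-\widehat{\delta_{Y}}\circ\widehat{\delta_{Z}}\bigr)(X),
\]
and by the two ingredients above this equals $\widehat{\delta_{Z}\circ\delta_{Y}-\delta_{Y}\circ\delta_{Z}}\,(X)=\widehat{\delta_{\widehat{\delta_{Z}}(Y)}}(X)=[X,\widehat{\delta_{Z}}(Y)]=[X,[Y,Z]]$, which is precisely \eqref{LeibnizIdentity}. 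The only delicate point I anticipate is bookkeeping rather than substance: one must keep the two blocks of arguments apart and check at each step that everything remains well defined on $\wedge^{n-1}\mathcal{N}$ (and not merely on $\mathcal{N}^{\otimes(n-1)}$), so that $\widehat{\delta_{Y}}$ and the bracket genuinely descend to the exterior power. A brute-force alternative would be to expand $[[X,Y],Z]$, $[[X,Z],Y]$ and $[X,[Y,Z]]$ directly into sums of wedge monomials and cancel the terms using \eqref{eq2}; the operator reformulation above merely organises those cancellations transparently.
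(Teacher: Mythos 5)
Your proof is correct. Note first that the paper states this proposition without any proof (it is the classical Daletskii--Takhtajan observation that a $n$-Lie algebra induces a Leibniz structure on $\wedge^{n-1}\mathcal{N}$), so there is no argument of the authors to compare yours against line by line. Your operator reformulation is sound and complete: $Y\mapsto\delta_{Y}=ad(y_{1},\ldots,y_{n-1})$ descends to $\wedge^{n-1}\mathcal{N}$ by the skew-symmetry of the $n$-ary bracket; the extension $f\mapsto\widehat{f}$ to derivations of $\wedge^{\bullet}\mathcal{N}$ preserves commutators because a derivation of the exterior algebra is determined by its restriction to degree one; and the commutator formula $\delta_{Z}\delta_{Y}-\delta_{Y}\delta_{Z}=\delta_{\widehat{\delta_{Z}}(Y)}$ is exactly the Nambu identity \eqref{eq2} read for the adjoint action --- it is the $\phi=\mathrm{id}$ case of the computation the paper does carry out in Lemma \ref{lem1}. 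Two minor remarks. The sign you discard satisfies $\bigl((-1)^{n-1}\bigr)^{2}=1$ on both sides of \eqref{LeibnizIdentity}, so the rescaling step is not even needed, though your homogeneity argument for it is valid. And the ``delicate point'' you flag is genuinely harmless: the bracket of the proposition equals $(-1)^{n-1}\widehat{\delta_{Y}}(X)$, which is manifestly alternating in the $x_{j}$ and, in $Y$, factors through $\wedge^{n-1}\mathcal{N}$ because $ad$ does; this is consistent with the paper's own remark that elements of $\mathcal{L}(\mathcal{N})$ are simply tuples antisymmetric in their arguments.
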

 
Notice that  $ \wedge^{n-1}\mathcal{N}$ merely reflects that the fundamental object $X=(x_1, \cdots, x_{n-1})\in \wedge^{n-1}\mathcal{N}$
is antisymmetric in its arguments; it does not imply that $X$ is a $(n-1)$-multivector obtained by the associative.

Now, we recall the cochain complex  of $n$-Lie algebras with values in itself; for more details see
%
\cite{T, J}.
\begin{defn}
Let $(\mathcal{N},[\cdot ,\ldots,\cdot ])$ be a $n$-Lie algebra, a $\mathcal{N}$-valued $(p+1)$-cochain is a linear map
$\psi:\otimes^{p}\mathcal{L}(\mathcal{N})\wedge\mathcal{N}\rightarrow \mathcal{N}$. We denote by $C^{p}(\mathcal{N},\mathcal{N})$
the set of the $(p+1)$-cochains.
The coboundary operator  $\delta^{p+1}:C^{p}(\mathcal{N},\mathcal{N})\rightarrow C^{p+1}(\mathcal{N},\mathcal{N})$
is a linear map defined for $\psi\in C^{p}(\mathcal{N},\mathcal{N})$ by
\begin{equation}
\label{cohomology}
\begin{array}{lllll}
\delta^{p+1}\psi(a_{1},\ldots,a_{p},a_{p+1},z)&=&\sum\limits_{1\leq i< j\leq p+1}(-1)^{i}\psi(a_{1},\ldots,\widehat{a}_{i},\ldots,
a_{j-1},[a_{i},a_{j}],\ldots,a_{p+1},z)\\[0.5pt]
&+&\sum\limits_{i=1}^{p+1}(-1)^{i}\psi(a_{1},\ldots,\widehat{a}_{i},\ldots,a_{p+1},ad(a_{i}).z)\\[0.5pt]
&+&
\sum\limits_{i=1}^{p+1}(-1)^{i+1}ad(a_{i}).\psi(a_{1},\ldots,\widehat{a}_{i},\ldots,a_{p+1},z)\\[0.5pt]
&+&(-1)^{p}\sum\limits_{i=1}^{n-1}
[a_{p+1}^1,\ldots,\psi(a_{1},\ldots,a_{p},a_{p+1}^{i}),\ldots,a_{p+1}^{n-1},z]
\end{array}
\end{equation}
for $a_{i}=(a_{i}^1,\ldots, a_{i}^{n-1})\in \mathcal{L}(\mathcal{N}), ~z\in\mathcal{N}$.\\
We have $\delta^{p+1}\circ\delta^{p}=0$. Thus $(\mathcal{C}^{*}(\mathcal{N},\mathcal{N}),\delta)$ is a cohomology complex for $n$-Lie algebra $\mathcal{N}$. The elements of $Z^{p}:=Ker \delta^{p}$ are $p$-cocycles and   elements of $B^{p}:=Im \delta^{p-1}$ are $p$-coboundaries.
By definition, the $p^{th}$ cohomology group is the quotient space $H^{p}=Z^{p}/B^{p}$.
\end{defn}
\section{Cohomology of $n$-Lie algebras with values in an adjoint module}
The purpose  of this section is to construct cochain complex  $({C}^{*}(\mathcal{N},\mathcal{N}'),\delta)$
that defines a cohomology for $n$-Lie algebras in an adjoint $\mathcal{N}$-module $\mathcal{N}'$.
\begin{defn}
Let $(\mathcal{N},[\cdot ,\ldots,\cdot ])$ and $(\mathcal{N}',[\cdot ,\ldots,\cdot ]')$ be two $n$-Lie algebras and $\phi:\mathcal{N}\rightarrow \mathcal{N}'$ be a $n$-Lie algebra morphism.
Regard $\mathcal{N}'$ as a $\mathcal{N}$-module via the adjoint representation of $\mathcal{N}$ induced by $\phi$. A $\mathcal{N}'$-valued $(m+1)$-cochain is a linear map
$f:\otimes^{m}\mathcal{L}(\mathcal{N})\wedge\mathcal{N}\rightarrow \mathcal{N}'$. We denote by
${C}^{m}(\mathcal{N},\mathcal{N}')$ the set of  $(m+1)$-cochains.
For $m\geq 0$, the coboundary operator $\delta^{m+1}:{C}^{m}(\mathcal{N},\mathcal{N}')\rightarrow
{C}^{m+1}(\mathcal{N},\mathcal{N}')$ is a linear map defined  
by
\begin{equation}\label{cohomology2}
\begin{array}{lllll}
 \delta^{m+1}f(x_{1},\ldots,x_{m},x_{m+1},z)&=&\sum\limits_{1\leq i< j\leq m+1}(-1)^{i}f(x_{1},\ldots,\widehat{x}_{i},\ldots,x_{j-1},[x_{i},x_{j}],\ldots,x_{m+1},z)\\
&+&\sum\limits_{i=1}^{m+1}(-1)^{i}f(x_{1},\ldots,\widehat{x}_{i},\ldots,x_{m+1},ad(x_{i}).z)\\
&+&\sum\limits_{i=1}^{m+1}(-1)^{i+1}L'(x_{i}).f(x_{1},\ldots,\widehat{x}_{i},\ldots,x_{m+1},z)\\
&+&\sum\limits_{i=1}^{n-1}(-1)^{m}[\phi(x_{m+1}^{1}),\ldots,f(x_{1},\ldots,x_{m},x_{m+1}^{i}),
\ldots,\phi(x_{m+1}^{n-1}),\phi(z)]'.
\end{array}
\end{equation}
\end{defn}
\begin{prop}\label{compatibility} We have  
$\delta^{m+2}\circ\delta^{m+1}=0$.
\end{prop}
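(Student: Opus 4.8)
The plan is to follow the proof that $\delta^{p+1}\circ\delta^{p}=0$ for the complex \eqref{cohomology} with values in $\mathcal{N}$ itself (see \cite{T,J}) and to check that the handful of places where the coefficient module enters still go through when $\mathcal{N}$ is replaced by the adjoint $\mathcal{N}$-module $\mathcal{N}'$ induced by $\phi$. Formula \eqref{cohomology2} is obtained from \eqref{cohomology} by the two substitutions $ad(a_{i}).\psi(\dots)\leadsto L'(x_{i}).f(\dots)$ in the third sum and $[a_{p+1}^{1},\dots,\psi(\dots),\dots,a_{p+1}^{n-1},z]\leadsto[\phi(x_{m+1}^{1}),\dots,f(\dots),\dots,\phi(x_{m+1}^{n-1}),\phi(z)]'$ in the last sum (the first two sums only involve the Leibniz bracket of $\mathcal{L}(\mathcal{N})$ and the action of $ad$ on the last argument $z\in\mathcal{N}$, so they are untouched). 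Two facts established above make these substitutions compatible with the vanishing of $\delta^{2}$: by Lemma~\ref{lem1}, $L'$ makes $\mathcal{N}'$ a module over the Leibniz algebra $\mathcal{L}(\mathcal{N})$, exactly as $ad$ does in the self-coefficient case; and since $\phi$ is a morphism one has the intertwining relations $L'(x).\phi(y)=[\phi(x_{1}),\dots,\phi(x_{n-1}),\phi(y)]'=\phi\big(ad(x).y\big)$ for $y\in\mathcal{N}$ and $\phi([x_{1},\dots,x_{n}])=[\phi(x_{1}),\dots,\phi(x_{n})]'$, which let one push $\phi$ freely through brackets.

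Concretely, I would write $\delta^{m+1}=\delta_{(1)}+\delta_{(2)}+\delta_{(3)}+\delta_{(4)}$ for the four sums in \eqref{cohomology2}, expand $\delta^{m+2}(\delta^{m+1}f)$ into the sixteen blocks $\delta_{(a)}\delta_{(b)}f$, and match them. The purely combinatorial block $\delta_{(1)}\delta_{(1)}f$ produces doubly nested brackets $[[x_{i},x_{j}],x_{k}]$ and $[x_{i},[x_{j},x_{k}]]$ in $\mathcal{L}(\mathcal{N})$ and vanishes after reindexing because $\mathcal{L}(\mathcal{N})$ is a Leibniz algebra, i.e. by \eqref{LeibnizIdentity}; this is the classical step. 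The block $\delta_{(2)}\delta_{(2)}f$ produces $ad(x_{i})\,ad(x_{j}).z$ inside $f$ and cancels against the $ad([x_{i},x_{j}]).z$ terms of $\delta_{(2)}\delta_{(1)}f$ by the representation identity \eqref{eq4} for $ad$; the block $\delta_{(3)}\delta_{(3)}f$ produces $L'(x_{i}).L'(x_{j}).f(\dots)$ and cancels against the $L'([x_{i},x_{j}]).f(\dots)$ terms of $\delta_{(3)}\delta_{(1)}f$ by Lemma~\ref{lem1} (equation \eqref{lem2}) --- this is exactly where the module structure $L'$, rather than $ad$ itself, is needed. The mixed pieces $\delta_{(2)}\delta_{(3)}f+\delta_{(3)}\delta_{(2)}f$ cancel internally because $ad$ acts only on $z$ while $L'$ acts only on the value of $f$, so the two operators commute while the attached signs $(-1)^{i}(-1)^{j+1}$ and $(-1)^{j}(-1)^{i+1}$ are opposite; similarly $\delta_{(1)}\delta_{(2)}f$ and $\delta_{(1)}\delta_{(3)}f$ are absorbed by the parts of $\delta_{(2)}\delta_{(1)}f$ and $\delta_{(3)}\delta_{(1)}f$ in which the outer operator hits a slot other than the newly formed bracket.

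The delicate blocks --- and the step I expect to be the real obstacle --- are the ones involving $\delta_{(4)}$, since $\delta_{(4)}$ is itself a sum over the $n-1$ components of the last fundamental object, so $\delta_{(4)}\delta_{(4)}f$ is a double sum over $(n-1)^{2}$ index pairs in which the value of $f$ sits inside an iterated bracket $\big[\phi(x_{m+2}^{\bullet}),\dots,[\phi(x_{m+1}^{\bullet}),\dots,f(\dots),\dots]',\dots,\phi(z)\big]'$ in $\mathcal{N}'$. Here the plan is to apply the Nambu identity \eqref{eq2} for $[\cdot,\dots,\cdot]'$ to this nested bracket together with the skew-symmetry \eqref{eq1}; this expresses it as a sum of terms of three shapes: terms in which a bracket among the components $x_{m+2}^{a},x_{m+2}^{b},\dots$ has been formed (these must cancel the $\delta_{(4)}\delta_{(1)}f$ contributions, after using $\phi([\cdot])=[\phi(\cdot)]'$ to move $\phi$ inside), terms in which $L'(\cdot)$ acts on the output of the inner $\delta_{(4)}$ (cancelling $\delta_{(3)}\delta_{(4)}f+\delta_{(4)}\delta_{(3)}f$), and terms in which $ad$ acts on $z$ before $\phi$ is applied, where the intertwining $L'(x).\phi(y)=\phi(ad(x).y)$ matches $\delta_{(2)}\delta_{(4)}f+\delta_{(4)}\delta_{(2)}f$. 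The crucial sign input is that the prefactor $(-1)^{m}$ of the last sum becomes $(-1)^{m+1}$ in $\delta^{m+2}$, so that $\delta_{(4)}\delta_{(4)}f$ acquires an overall minus sign and its residual terms cancel in pairs. Carrying out this reorganization while tracking all $(n-1)^{2}$ index pairs and the accumulated signs is the only genuinely laborious part; everything else is the self-coefficient computation transcribed with $ad$ replaced by $L'$ and $[\cdot]$ replaced by $\phi$ followed by $[\cdot]'$, legitimized by Lemma~\ref{lem1} and the morphism property of $\phi$.
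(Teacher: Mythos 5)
Your proposal follows essentially the same route as the paper's own proof: a term-by-term expansion of $\delta^{m+2}\circ\delta^{m+1}f$ into blocks that cancel via the Leibniz identity in $\mathcal{L}(\mathcal{N})$, the module property of $L'$ from Lemma~\ref{lem1}, the combinatorial sign cancellations among the mixed blocks, and --- for the blocks involving the last sum --- the Nambu identity in $\mathcal{N}'$ together with the morphism property of $\phi$, exactly the ingredients the paper uses to kill its groups of labelled terms. The only difference is one of presentation (you organize by the sixteen compositions $\delta_{(a)}\delta_{(b)}$ rather than by the paper's letter labels), and you correctly flag the $\delta_{(4)}\delta_{(4)}$ block as the genuinely laborious step, which corresponds to the paper's treatment of the $q$-terms.
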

\begin{proof}
\begin{align}
\nonumber&\delta^{m+2}\circ\delta^{m+1}f(x_{1},\ldots,x_{m+2},z)
=\sum\limits_{i<j}(-1)^{i}\delta^{m+1}f(x_{1},\ldots,\widehat{x}_{i},\ldots
,x_{j-1},[x_{i},x_{j}],\ldots,x_{m+2},z)\\
\nonumber&+\sum\limits_{i=1}^{m+2}(-1)^{i}\delta^{m+1}f(x_{1},\ldots,\widehat{x}_{i},\ldots,x_{m+1},x_{m+2},ad(x_{i}).z)\\
\nonumber&+\sum\limits_{i=1}^{m+2}(-1)^{i+1}L'(x_{i}).\delta^{m+1}f(x_{1},\ldots,\widehat{x}_{i},\ldots,x_{m+2},z)
\\
\nonumber&+\sum\limits_{i=1}^{n-1}(-1)^{m+1}[\phi(x_{m+2}^{1}),\delta^{m+1}f(x_{1},\ldots,x_{m+1},x_{m+2}^{i}),\ldots,
\phi(x_{m+2}^{n-1}),\phi(z)]'
\\
&\tag{a1}\label{a1}
=\sum\limits_{s<t<i<j}(-1)^{i+s}f(x_{1},\ldots,\widehat{x}_{s},\ldots,\widehat{x}_{t},\ldots,[x_{s},x_{t}],\ldots,[x_{i},x_{j}],\ldots,x_{m+2},z)\\
&\tag{a2}\label{a2}
+\sum\limits_{s<i<t<j}(-1)^{i+s}f(x_{1},\ldots,\widehat{x}_{s},\ldots,\widehat{x}_{i},\ldots,[x_{s},x_{t}],\ldots,[x_{i},x_{j}],\ldots,x_{m+2},z)\\
&\tag{a3}\label{a3}
+\sum\limits_{s<i<j<t}(-1)^{i+s}f(x_{1},\ldots,\widehat{x}_{s},\ldots,\widehat{x}_{i},\ldots,[x_{i},x_{j}],\ldots,[x_{s},x_{t}],\ldots,x_{m+2},z)
\\
&\tag{a4}\label{a4}
-\sum\limits_{i<s<t<j}(-1)^{i+s}f(x_{1},\ldots,\widehat{x}_{i},\ldots,\widehat{x}_{s},\ldots,[x_{s},x_{t}],\ldots,[x_{i},x_{j}],\ldots,x_{m+2},z)\\
&\tag{a5}\label{a5}
-\sum\limits_{i<s<j<t}(-1)^{i+s}f(x_{1},\ldots,\widehat{x}_{i},\ldots,\widehat{x}_{s},\ldots,[x_{i},x_{j}],\ldots,[x_{s},x_{t}],\ldots,x_{m+2},z)
\\
&\tag{a6}\label{a6}
-\sum\limits_{i<j<s<t}(-1)^{i+s}f(x_{1},\ldots,\widehat{x}_{i},\ldots,[x_{i},x_{j}],\ldots,\widehat{x}_{s},\ldots,[x_{s},x_{t}],\ldots,x_{m+2},z)\\
&\tag{b1}\label{b1}
+\sum\limits_{k<i<j}(-1)^{i+k}f(x_{1},\ldots,\widehat{x}_{k},\ldots,\widehat{x}_{i},\ldots,[x_{k},[x_{i},x_{j}]],\ldots,x_{m+2},z)\\
&\tag{b2}\label{b2}
+\sum\limits_{i<k<j}(-1)^{i+k}f(x_{1},\ldots,\widehat{x}_{i},\ldots,\widehat{x}_{k},\ldots,[x_{k},[x_{i},x_{j}]],\ldots,x_{m+2},z)\\
&\tag{b3}\label{b3}
+\sum\limits_{i<j<k}(-1)^{i+k}f(x_{1},\ldots,\widehat{x}_{i},\ldots,\widehat{[x_{i},x_{k}]},\ldots,[[x_{i},x_{k}],x_{j}]\ldots,x_{m+1},z)\\
&\tag{c1}\label{c1}
+\sum\limits_{k<i<j}(-1)^{i+k}f(x_{1},\ldots,\widehat{x}_{k},\ldots,\widehat{x}_{i},\ldots,[x_{i},x_{j}],\ldots,x_{m+2}, ad(x_{k}).z)\\
&\tag{c2}\label{c2}
-\sum\limits_{i<k<j}(-1)^{i+k}f(x_{1},\ldots,\widehat{x}_{i},\ldots,\widehat{x}_{k},\ldots,[x_{i},x_{j}],\ldots,x_{m+2},ad(x_{k}).z)\\
&\tag{c3}\label{c3}
-\sum\limits_{i<j<k}(-1)^{i+k}f(x_{1},\ldots,\widehat{x}_{i},\ldots,[x_{i},x_{j}],\ldots,\widehat{x}_{k},\ldots,x_{m+2},ad(x_{k}).z)\\
&\tag{d1}\label{d1}
-\sum\limits_{i<j}(-1)^{i+j}f(x_{1},\ldots,\widehat{x}_{i},\ldots,\widehat{x}_{j},\ldots,x_{m+2},ad([x_{i},x_{j}]).z)\\
&\tag{e1}\label{e1}
+\sum\limits_{k<i<j}(-1)^{i+k+1}L'(x_{k}).f(x_{1},\ldots,\widehat{x}_{k},\ldots,\widehat{x}_{i},\ldots,[x_{i},x_{j}],\ldots,z)\\
&\tag{e2}\label{e2}
-\sum\limits_{i<k<j}(-1)^{i+k+1}L'(x_{k}).f(x_{1},\ldots,\widehat{x}_{i},\ldots,\widehat{x}_{k},\ldots,[x_{i},x_{j}],\ldots,z)\\
&\tag{e3}\label{e3}
-\sum\limits_{i<j<k}(-1)^{i+k+1}L'(x_{k}).f(x_{1},\ldots,\widehat{x}_{i},\ldots,\widehat{x}_{k},\ldots,[x_{i},x_{j}],\ldots,z)\\
&\tag{g1}\label{g1}
-\sum\limits_{i<j}(-1)^{i+j+1}L'([x_{i},x_{j}]).f(x_{1},\ldots,\widehat{x}_{i},\ldots,\widehat{x}_{j},\ldots,x_{m+2},z)
\\
&\tag{h1}\label{h1}
+\sum\limits_{i<j\leq m+1}\sum\limits_{k=1}^{n-1}(-1)^{i+m}[\phi(x_{m+2}^{1}),\ldots,f(x_{1},\ldots,\widehat{x}_{i},\ldots,[x_{i},x_{j}],
\ldots,x_{m+1},\ldots,x_{m+2}^{k}),\ldots,\phi(x_{m+2}^{n-1}),\phi(z)]'\\
&\tag{i1}\label{i1}
+\sum\limits_{k=1}^{m+1}(-1)^{k+m}\sum\limits_{i=1}^{n-1}[[x_{k},x_{m+2}]^1, \ldots, f(x_{1},\ldots,\widehat{x}_{k},\ldots,x_{m+1},[x_{k},x_{m+2}]^i),\ldots,[x_{k},x_{m+2}]^{n-1},z)]\\
&\tag{c4}\label{c4}
+\sum\limits_{s<t<i}(-1)^{s+k}f(x_{1},\ldots,\widehat{x}_{s},\ldots,[x_{s},x_{t}],\ldots,\widehat{x}_{i},\ldots,x_{m+2},ad(x_{i}).z)
\end{align}
\begin{align}
&\tag{c5}\label{c5}
+\sum\limits_{s<i<t}(-1)^{s+k}f(x_{1},\ldots,\widehat{x}_{s},\ldots,\widehat{x}_{i},
\ldots,[x_{s},x_{t}],\ldots,x_{m+2},ad(x_{i}).z)\\
&\tag{c6}\label{c6}
-\sum\limits_{i<s<t}(-1)^{s+k}f(x_{1},\ldots,\widehat{x}_{i},\ldots,\widehat{x}_{s},
\ldots,[x_{s},x_{t}],\ldots,x_{m+2},ad(x_{i}).z)\\
&\tag{d2}\label{d2}
+\sum\limits_{k<i}(-1)^{i+k}f(x_{1},\ldots,\widehat{x}_{k},\ldots,\widehat{x}_{i},\ldots,x_{m+2},
ad(x_{k}).(ad(x_{i}).z))\\
&\tag{d3}\label{d3}
-\sum\limits_{i<k}(-1)^{i+k}f(x_{1},\ldots,\widehat{x}_{i},\ldots,\widehat{x}_{k},\ldots,x_{m+2},
ad(x_{k}).(ad(x_{i}).z))\\
&\tag{p1}\label{p1}
+\sum\limits_{k<i}(-1)^{i+k+1}L'(x_{k}).f(x_{1},\ldots,\widehat{x}_{k},\ldots,\widehat{x_{i}},\ldots,x_{m+2},ad(x_{i}).z)\\
&\tag{p2}\label{p2}
-\sum\limits_{i<k}(-1)^{i+k+1}L'(x_{k}).f(x_{1},\ldots,\widehat{x}_{i},\ldots,\widehat{x_{k}},\ldots,x_{m+2},ad(x_{i}).z)\\
&\tag{i2}\label{i2}
+\sum\limits_{k=1}^{n-1}\sum\limits_{i=1}^{m+1}(-1)^{i+m}
[\phi(x_{m+2}),\ldots,f(x_{1},\ldots,\widehat{x}_{i},\ldots,x_{m+1},x_{m+2}^{k}),\ldots,\phi(x_{m+2}^{n-1}),\phi\circ ad(x_{i}).z]'\\
&\tag{q1}\label{q1}
+\sum\limits_{k=1}^{n-1}[\phi(x_{m+1}^{1}),\ldots,f(x_{1},\ldots,\ldots,x_{m+1}^{k}),\ldots,
\phi\circ ad(x_{m+2}).z]'\\
&\tag{e4}\label{e4}
+\sum\limits_{s<t<i}(-1)^{s+i+1}L'(x_{i}).f(x_{1},\ldots,\widehat{x}_{s},
\ldots,[x_{s},x_{t}],\ldots,\widehat{x}_{i},\ldots,x_{m+2},z)\\
&\tag{e5}\label{e5}
+\sum\limits_{s<i<t}(-1)^{s+i+1}L'(x_{i}).f(x_{1},\ldots,\widehat{x}_{s},
\ldots,\widehat{x}_{i},\ldots,[x_{s},x_{t}],\ldots,x_{m+2},z)\\
&\tag{e6}\label{e6}
+\sum\limits_{i<s<t}(-1)^{s+i+1}L'(x_{i}).f(x_{1},\ldots,\widehat{x}_{i},
\ldots,\widehat{x}_{s},\ldots,[x_{s},x_{t}],\ldots,x_{m+2},z)\\
&\tag{p3}\label{p3}
+\sum\limits_{k<i}(-1)^{i+k}L'(x_{i}).f(x_{1},\ldots,\widehat{x}_{k},\ldots,\widehat{x}_{i},\ldots,x_{m+2},ad(x_{k}).z)\\
&\tag{p4}\label{p4}
-\sum\limits_{i<k}(-1)^{i+k}L'(x_{i}).f(x_{1},\ldots,\widehat{x}_{i},\ldots,\widehat{x}_{k},\ldots,x_{m+2},ad(x_{k}).z)\\
&\tag{g2}\label{g2}
+\sum\limits_{k<i}(-1)^{i+k+1}L'(x_{i}).(L'(x_{k}).f(x_{1},\ldots,\widehat{x}_{k},\ldots,\widehat{x}_{i},\ldots,x_{m+2},z))\\
&\tag{g3}\label{g3}
-\sum\limits_{i<k}(-1)^{i+k+1}L'(x_{i}).(L'(x_{k}).f(x_{1},\ldots,\widehat{x}_{i},\ldots,\widehat{x}_{k},\ldots,x_{m+2},z))
\\
&\tag{i3}\label{i3}
-\sum\limits_{i=1}^{m+1}(-1)^{i+m}L'(x_{i}).(\sum\limits_{k=1}^{n-1}[\phi(x_{m+2}),\ldots,f(x_{1},\ldots,\widehat{x}_{i},\ldots,x_{m+1},
x_{m+2}^{k}),\ldots,\phi(x_{m+2}^{n-1}),\phi(z)]')\\
&\tag{q2}\label{q2}
-L'(x_{m+2}).(\sum\limits_{k=1}^{n-1}[\phi(x_{m+1}^{1}),\ldots,f(x_{1},\ldots,x_{m},x_{m+1}^{k}),\ldots,\phi(x_{m+1}^{n-1}),\phi(z)]')\\
&\tag{h2}\label{h2}
-\sum\limits_{i=1}^{n-1}\sum\limits_{s\leq t\leq m+1}(-1)^{i+m}[\phi(x_{m+2}^{1}),\ldots,f(x_{1},\ldots,\widehat{x}_{s},\ldots,[x_{s},x_{t}]
,\ldots,x_{m+1},x_{m+2}^{i}),\ldots,\phi(x_{m+2}^{n-1}),\phi(z)]'\\
&\tag{i4}\label{i4}
-\sum\limits_{i=1}^{n-1}\sum\limits_{k=1}^{m+1}(-1)^{k+m}
[\phi(x_{m+2}^{1}),\ldots,f(x_{1},\ldots,x_{k},\ldots,x_{m+1},ad(x_{k}).x_{m+2}^{i}),\ldots,\phi(x_{m+2}^{n-1}),\phi(z)]'\\
&\tag{i5}\label{i5}
-\sum\limits_{i=1}^{n-1}\sum\limits_{k=1}^{m+1}(-1)^{k+m}
[\phi(x_{m+2}^{1}),\ldots,L'(x_{i}).f(x_{1},\ldots,x_{i},\ldots,x_{m+2}^{k}),\ldots,\phi(x_{m+2}^{n-1}),\phi(z)]'\\
&\tag{q3}\label{q3}
-\sum\limits_{i=1}^{n-1}\sum\limits_{k=1}^{n-1}[\phi(x_{m+2}^{1}),\ldots,[\phi(x_{m+1}^{1}),\ldots,f(x_{1},\ldots,x_{m},x_{m+1}^{k})
,\ldots,\phi(x_{m+1}^{n-1}),\phi(x_{m+2}^{i})]',\ldots,\phi(x_{m+2}^{n-1}),\phi(z)]'
\end{align}
We will show that the sum of terms named by the same letter vanish. Indeed, thanks to identity \eqref{LeibnizIdentity} in  $(\mathcal{L}(\mathcal{N}),[\cdot,\cdot])$, which  is a Leibniz algebra), we get
\begin{eqnarray*}
\eqref{b1}+\eqref{b2}+\eqref{b3}&=&\sum\limits_{i<k<j}(-1)^{i+k}f(x_{1},\ldots,\widehat{x}_{i},\ldots,\widehat{x}_{k},\ldots,[x_{k},[x_{i},x_{j}]]
 ,\ldots,x_{m+1} ,z)\\
 &-&\sum\limits_{i<k<j}(-1)^{i+k}f(x_{1},\ldots,\widehat{x}_{i},\ldots,\widehat{x}_{k},\ldots,[x_{i},[x_{k},x_{j}]],\ldots,x_{m+1},z)\\
 &-&\sum\limits_{i<k<j}(-1)^{i+k}f(x_{1},\ldots,\widehat{x}_{i},\ldots,\widehat{[x_{i},x_{k}]},\ldots,[[x_{i},x_{k}],x_{j}],\ldots,x_{p+1},z)\\
 &=&0.
\end{eqnarray*}
Thanks to the property (\ref{lem2}), we get
$\eqref{d1}+\eqref{d2}+\eqref{d3}=0$. Now, we have

\begin{equation}\label{ni}
\eqref{i1}+\eqref{i2}+\eqref{i3}+\eqref{i4}+\eqref{i5}=0.
\end{equation}
Indeed: First, we can see that  formulas \eqref{i1} and \eqref{i3} can be expressed as follows:
\begin{align}
\tag{i1a}\label{i1a}
\eqref{i1}&=\sum\limits_{k=1}^{m+1}(-1)^{k+m}\sum\limits_{i=1}^{n-1}\sum\limits_{j<i}
[\phi(x_{m+2}^{1}),\ldots,f(x_{1},\ldots,\widehat{x}_{k},\ldots,x_{m+1},x_{m+2}^{j}),\ldots,\phi \circ ad(x_{k}).x_{m+2}^{i},\ldots,
\phi(x_{m+2}^{n-1}),\phi(z)]'\\
&\tag{i1b}\label{i1b}
+\sum\limits_{k=1}^{m+1}(-1)^{k+m}\sum\limits_{i=1}^{n-1}\sum\limits_{j>i}
[\phi(x_{m+2}^{1},\ldots,\phi \circ ad(x_{k}).x_{m+2}^{i},\ldots,f(x_{1},\ldots,\widehat{x}_{k},\ldots,x_{m+1},x_{m+2}^{j}),\ldots,
\phi(x_{m+2}^{n-1}),\phi(z)]'\\
&\tag{i1c}\label{i1c}
-\sum\limits_{k=1}^{m+1}(-1)^{k+m}\sum\limits_{i=1}^{n-1}[\phi(x_{m+2}^{1}),\ldots,f(x_{1},\ldots,\widehat{x}_{k},\ldots,x_{m+2},
ad(x_{k}).x_{m+2}^{i}),\ldots,\phi(x_{m+2}^{n-1}),\phi(z)]'.
\end{align}
\begin{align}
\tag{i3a}\label{i3a}
\eqref{i3}&=-\sum\limits_{i=1}^{m+1}(-1)^{i+m}\sum\limits_{j=1}^{n-1}\sum_{l<j}[\phi(x_{m+2}^{1}),\ldots,\phi\circ ad(x_{i})\cdot x_{m+2}^{l}
,\ldots,f(x_{1},\ldots,\widehat{x}_{i},\ldots,x_{m+1},x_{m+2}^{j}),\ldots,\phi(x_{m+2}^{n-1}),\phi(z)]'\\
&\tag{i3b}\label{i3b}
-\sum\limits_{i=1}^{m+1}(-1)^{i+m}\sum\limits_{j=1}^{n-1}\sum_{l>j}[\phi(x_{m+2}^{1}),
,\ldots,f(x_{1},\ldots,\widehat{x}_{i},\ldots,x_{m+1},x_{m+2}^{j}),\ldots,\phi\circ ad(x_{i})\cdot x_{m+2}^{l},
\ldots,\phi(x_{m+2}^{n-1}),\phi(z)]'\\
&\tag{i3c}\label{i3c}
-\sum\limits_{i=1}^{m+1}(-1)^{i+m}\sum\limits_{j=1}^{n-1}[\phi(x_{m+2}^{1}),
,\ldots,L'(x_{i})\cdot f(x_{1},\ldots,\widehat{x}_{i},\ldots,x_{m+1},x_{m+2}^{j}),\ldots,
\ldots,\phi(x_{m+2}^{n-1}),\phi(z)]'\\
&\tag{i3d}\label{i3d}
-\sum\limits_{i=1}^{m+1}(-1)^{i+m}\sum\limits_{j=1}^{n-1}[\phi(x_{m+2}^{1}),
,\ldots,f(x_{1},\ldots,\widehat{x}_{i},\ldots,x_{m+1},x_{m+2}^{j}),\ldots,
\ldots,\phi(x_{m+2}^{n-1}),\phi\circ ad(x_{i})\cdot z]'.
\end{align}
Second, we  check that
\[
\eqref{i4}+\eqref{i1c}=0,~ \eqref{i2}+\eqref{i3d}=0, ~\eqref{i1a}+\eqref{i3b}=0,~ \eqref{i1b}+\eqref{i3a}=0~
\hbox{ and } \eqref{i3c}+\eqref{i5}=0.
\]
Then, we get formula (\ref{ni}).

Next, we have
 \begin{equation}\label{ni1}
 \eqref{q1}+\eqref{q2}+\eqref{q3}=0.
  \end{equation}
Indeed: First, we can see that  formulas \eqref{q2} and \eqref{q3} can be expressed as follows:

\begin{align}
\tag{$q_{2}1$}\label{$q_{2}1$}
\eqref{q2}&=-\sum\limits_{k=1}^{n-1}\sum\limits_{i\neq k}[\phi(x_{m+1}^{1}),\ldots,\phi\circ ad(x_{m+2})\cdot x_{m+1}^{i},\ldots,
f(x_{1},\ldots,x_{m},x_{m+1}^{k}),\ldots,\phi(x_{m+1}),\phi(z)]'\\
&\tag{$q_{2}2$}\label{$q_{2}2$}
-\sum\limits_{k=1}^{n-1}[\phi(x_{m+1}^{1}),\ldots,L'(x_{m+2})\cdot f(x_{1},\ldots,x_{m},x_{m+1}^{k}),\ldots,\phi(x_{m+1}^{n-1}),\phi(z)]'\\
&\tag{$q_{2}3$}\label{$q_{2}3$}
-\sum\limits_{k=1}^{n-1}[\phi(x_{m+1}^{1}),\ldots,f(x_{1},\ldots,x_{m},x_{m+1}^{k}),\ldots,\phi(x_{m+1}^{n-1}),
\phi\circ ad(x_{m+2})\cdot z]'
\end{align}

\begin{align}
\tag{$q_{3}1$}\label{$q_{3}1$}
 \eqref{q3}&=-\sum\limits_{k=1}^{n-1}[\phi(x_{m+1}^{1}),\ldots,f(x_{1},\ldots,x_{m},x_{m+1}^{k}),\ldots,\phi(x_{m+1}^{n-1}),
[\phi(x_{m+2}^{1}),\ldots,\phi(x_{m+2}^{n-1}),\phi(z)]']'\\
&\tag{$q_{3}2$}\label{$q_{3}2$}
+ \sum\limits_{k=1}^{n-1}[\phi(x_{m+2}^{1}),\ldots,\phi(x_{m+2}^{n-1}),[\phi(x_{m+1}^{1}),\ldots,f(x_{1},\ldots,x_{m},x_{m+1}^{k}),
\ldots,\phi(x_{m+1}^{n-1}),\phi(z)]']'
\end{align}
Second, we  check that
$(\eqref{$q_{2}3$}+\eqref{q1}=0$ and $(\eqref{$q_{3}1$}+\eqref{$q_{2}1$}+\eqref{$q_{2}2$}+\eqref{$q_{3}2$}=0$.
Then, we get formula (\ref{ni1}). Finally, we can see that the other case are equal to $0$ by direct calculation.
\end{proof}

\begin{defn}
The space of $(n+1)$-cocycles is defined by
$$Z^{n+1}(\mathcal{N}, \mathcal{N}')=\{\varphi\in{\mathcal{C}}^{n}
(\mathcal{N}, \mathcal{N}'):\delta^{n+1}\varphi=0\},$$
and the space of $(n+1)$-coboundaries is defined by
$$B^{n+1}(\mathcal{N}, \mathcal{N}')=\{\psi=\delta^{n}\varphi:\varphi
\in{\mathcal{C}}^{n-1}(\mathcal{N},\mathcal{N}')\}.$$
One has $B^{n+1}(\mathcal{N}, \mathcal{N}')\subset Z^{n+1}(\mathcal{N}, \mathcal{N}')$.
Then,
we call the $(n+1)^{th}$ cohomology group of the $n$-Lie algebra $\mathcal{N}$ with coefficients in $\mathcal{N}'$,
the quotient
$$H^{n+1}(\mathcal{N}, \mathcal{N}')=\frac{Z^{n+1}(\mathcal{N}, \mathcal{N}')}{B^{n+1}(\mathcal{N}, \mathcal{N}')}.$$
\end{defn}
\section{Cohomology complex of $n$-Lie algebra morphisms}
The original cohomology theory associated to deformation of Lie algebra morphisms was developed
by Fr\'{e}gier in \cite{f}.
The aim of  this section is to provide the main result of this paper, that is a generalization of 
this theory to $n$-Lie algebra morphisms.

Let $\phi:\mathcal{N}\rightarrow\mathcal{N}'$ be a $n$-Lie algebra morphism.
Regard $\mathcal{N}'$ as a representation of $\mathcal{N}$ via $\phi$ wherever
appropriate.
We define the module of $(m+1)$-cochains of the morphism $\phi$ to be
\begin{equation}\label{do}
\mathcal{C}^{m}(\phi,\phi)=\mathcal{C}^{m}(\mathcal{N},\mathcal{N})\otimes \mathcal{C}^{m}(\mathcal{N}',\mathcal{N}')
\otimes {\mathcal{C}}^{m-1}(\mathcal{N},\mathcal{N}').
\end{equation}
The coboundary operator $\delta^{m+1}:\mathcal{C}^{m}(\phi,\phi)\rightarrow \mathcal{C}^{m+1}(\phi,\phi)$ is
defined by
$$\delta^{m+1}(\varphi_{1}, \varphi_{2}, \varphi_{3})=(\delta^{m+1}\varphi_{1},
 \delta^{m+1}\varphi_{2}, \delta^{m}\varphi_{3}+(-1)^{m}(\phi\circ\varphi_{1}-
\varphi_{2}\circ(\bar{\phi}^{\otimes m}\wedge\phi))),$$
where $\delta^{m+1}\varphi_{1}$ and $\delta^{m+1}\varphi_{2}$ are defined by \eqref{cohomology},
  $\delta^{m}\varphi_{3}$
is defined by \eqref{cohomology2} and $\bar{\phi}^{\otimes m}\wedge\phi:\otimes^{m}\mathcal{L}(\mathcal{N})\wedge\mathcal{N}\rightarrow\otimes^{m}\mathcal{L}(\mathcal{N}')\wedge\mathcal{N}'$
is defined by:
$$(\bar{\phi}^{\otimes m}\wedge\phi)(x_{1},\ldots,x_{m},z)=(\bar{\phi}(x_{1}),\ldots,\bar{\phi}(x_{m}),\phi(z))
\quad \hbox{for}~x_i\in\mathcal{L}(\mathcal{N})~\hbox{and}~z\in\mathcal{N}.
$$
\begin{prop} We have $\delta^{m+2}\circ\delta^{m+1}=0$. Hence
$(C^{*}(\phi,\phi),\delta)$ is a cochain complex.
\end{prop}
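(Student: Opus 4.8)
The plan is to reduce the statement to facts already established in the excerpt together with one genuinely new identity. Write an element of $\mathcal{C}^{m}(\phi,\phi)$ as a triple $(\varphi_{1},\varphi_{2},\varphi_{3})$ with $\varphi_{1}\in\mathcal{C}^{m}(\mathcal{N},\mathcal{N})$, $\varphi_{2}\in\mathcal{C}^{m}(\mathcal{N}',\mathcal{N}')$, $\varphi_{3}\in\mathcal{C}^{m-1}(\mathcal{N},\mathcal{N}')$, and abbreviate $\Psi_{m}(\varphi_{1},\varphi_{2}) := \phi\circ\varphi_{1}-\varphi_{2}\circ(\bar{\phi}^{\otimes m}\wedge\phi)\in\mathcal{C}^{m}(\mathcal{N},\mathcal{N}')$. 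Then by definition
$$\delta^{m+1}(\varphi_{1},\varphi_{2},\varphi_{3})=\bigl(\delta^{m+1}\varphi_{1},\ \delta^{m+1}\varphi_{2},\ \delta^{m}\varphi_{3}+(-1)^{m}\Psi_{m}(\varphi_{1},\varphi_{2})\bigr).$$
Applying $\delta^{m+2}$ again, the first two components are $\delta^{m+2}\delta^{m+1}\varphi_{1}=0$ and $\delta^{m+2}\delta^{m+1}\varphi_{2}=0$ by the cocycle property of the coboundary \eqref{cohomology} for $n$-Lie algebras with values in themselves (applied once to $\mathcal{N}$ and once to $\mathcal{N}'$). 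The third component is
$$\delta^{m+1}\bigl(\delta^{m}\varphi_{3}+(-1)^{m}\Psi_{m}(\varphi_{1},\varphi_{2})\bigr)+(-1)^{m+1}\Psi_{m+1}\bigl(\delta^{m+1}\varphi_{1},\delta^{m+1}\varphi_{2}\bigr),$$
where $\delta^{m+1}$ on the first summand is the operator \eqref{cohomology2} for $\mathcal{N}'$-valued cochains. Since $\delta^{m+1}\delta^{m}\varphi_{3}=0$ by Proposition~\ref{compatibility}, what remains to prove is the mixed identity
$$(-1)^{m}\,\delta^{m+1}\Psi_{m}(\varphi_{1},\varphi_{2})=(-1)^{m}\,\Psi_{m+1}\bigl(\delta^{m+1}\varphi_{1},\delta^{m+1}\varphi_{2}\bigr),$$
i.e. that $\Psi$ is a cochain map $\delta^{m+1}\circ\Psi_{m}=\Psi_{m+1}\circ(\delta^{m+1}\oplus\delta^{m+1})$ intertwining the self-valued coboundaries on $(\mathcal{N},\mathcal{N})\oplus(\mathcal{N}',\mathcal{N}')$ with the $\mathcal{N}'$-valued coboundary \eqref{cohomology2}.

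To establish this I would split $\Psi_{m}=\phi_{*}\circ\mathrm{pr}_{1}-\phi^{*}\circ\mathrm{pr}_{2}$, where $\phi_{*}\colon\mathcal{C}^{m}(\mathcal{N},\mathcal{N})\to\mathcal{C}^{m}(\mathcal{N},\mathcal{N}')$ is post-composition with $\phi$, and $\phi^{*}\colon\mathcal{C}^{m}(\mathcal{N}',\mathcal{N}')\to\mathcal{C}^{m}(\mathcal{N},\mathcal{N}')$ is $\psi\mapsto\psi\circ(\bar{\phi}^{\otimes m}\wedge\phi)$, and prove separately that each of $\phi_{*}$ and $\phi^{*}$ is a chain map, that is $\delta^{m+1}\circ\phi_{*}=\phi_{*}\circ\delta^{m+1}$ and $\delta^{m+1}\circ\phi^{*}=\phi^{*}\circ\delta^{m+1}$. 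For $\phi_{*}$: because $\phi$ is a morphism, $\phi$ commutes with brackets, and for the $ad$-term one uses $\phi\circ ad(x_{i})=L'(x_{i})\circ\phi$ on $\mathcal{N}$, which is exactly the defining property of the adjoint $\mathcal{N}$-module structure on $\mathcal{N}'$ via $\phi$ from Lemma~\ref{lem1}; comparing \eqref{cohomology} term by term with \eqref{cohomology2} then matches the four sums (the first sum involving brackets $[x_i,x_j]$ in $\mathcal{L}(\mathcal{N})$ is untouched by $\phi$, the $ad(x_i).z$ term becomes the same argument under $\phi_*$, the $ad(a_i).\psi$ term becomes $L'(x_i).$ applied to the pushed-forward cochain, and the last $\mathcal{N}$-bracket term becomes the $\mathcal{N}'$-bracket term with $\phi$ inserted). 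For $\phi^{*}$: since $\phi$ is a morphism one checks $\bar{\phi}([x,y])=[\bar{\phi}(x),\bar{\phi}(y)]$ on $\mathcal{L}(\mathcal{N})$ and that precomposition carries the $\mathcal{N}'$-coboundary \eqref{cohomology} to \eqref{cohomology2} — again the four sums correspond because $\bar\phi$ respects the fundamental-set bracket, because $\bar\phi(x_i)$ acts on $\mathcal{N}'$ via $ad'$, which on the image of $\phi$ is compatible with the $L'$-action, and because the brackets in the last term are already in $\mathcal{N}'$. Adding the two signed contributions gives the desired identity for $\Psi$, and the proof is complete.

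The main obstacle is the bookkeeping in the two intertwining identities $\delta\circ\phi_{*}=\phi_{*}\circ\delta$ and $\delta\circ\phi^{*}=\phi^{*}\circ\delta$: one must verify that each of the four sums in \eqref{cohomology} is carried to the corresponding sum in \eqref{cohomology2} with matching signs, and this hinges precisely on the two compatibilities $\phi\circ ad(x)=L'(x)\circ\phi$ (Lemma~\ref{lem1}) and $\bar{\phi}$-equivariance of the fundamental-set bracket, both consequences of $\phi$ being a morphism. None of the steps is deep, but the index-matching — especially keeping track of the hat positions, the $(-1)^m$ and $(-1)^{m+1}$ prefactors, and the fact that $\delta^{m}\varphi_3$ uses the operator \eqref{cohomology2} while $\Psi_{m}$ lands in degree $m$ so that $\delta^{m+1}\Psi_m$ also uses \eqref{cohomology2} — is where care is required. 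Once the intertwining is in place the three components vanish simultaneously, giving $\delta^{m+2}\circ\delta^{m+1}=0$ and hence that $(C^{*}(\phi,\phi),\delta)$ is a cochain complex.
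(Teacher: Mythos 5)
Your proposal is correct and follows essentially the same route as the paper: the first two components vanish by the already-established complexes on $\mathcal{C}^{*}(\mathcal{N},\mathcal{N})$ and $\mathcal{C}^{*}(\mathcal{N}',\mathcal{N}')$, and after $\delta^{m+1}\delta^{m}\varphi_{3}=0$ the third component reduces to the two intertwining identities $\delta^{m+1}(\phi\circ\varphi_{1})=\phi\circ\delta^{m+1}\varphi_{1}$ and $\delta^{m+1}(\varphi_{2}\circ(\bar{\phi}^{\otimes m}\wedge\phi))=\delta^{m+1}(\varphi_{2})\circ(\bar{\phi}^{\otimes (m+1)}\wedge\phi)$, which is exactly what the paper verifies term by term using $\phi\circ ad(x)=L'(x)\circ\phi$ and the morphism property of $\phi$ (hence of $\bar{\phi}$ on the fundamental-set bracket). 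The only difference is that you sketch the term-by-term matching rather than writing it out, but the two compatibilities you isolate are precisely the ones the paper's computation rests on.
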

\begin{proof}
The most right  component of $(\delta^{m+2}\circ\delta^{m+1})
(\varphi_{1},\varphi_{1},\varphi_{3})$ is 
$$(-1)^{m}\delta^{m+1}(
\phi\circ\varphi_{1}-\varphi_{2}\circ(\bar{\phi}^{\otimes m}\wedge\phi))+(-1)^{m+1}
(\phi\circ\delta^{m+1}(\varphi_{1})-\delta^{m+1}(\varphi_{2})\circ(\bar{\phi}^{\otimes (m+1)}\wedge\phi)).
$$
Thus, to finish the proof, one checks that
$\delta^{m+1}(\phi\circ\varphi_{1})=\phi\circ\delta^{m+1}(\varphi_{1})$ and
$\delta^{m+1}(\varphi_{2}\circ(\bar{\phi}^{\otimes m}\wedge\phi))=\delta^{m+1}(\varphi_{2})\circ(\bar{\phi}^{\otimes (m+1)}\wedge\phi)$.
Indeed:
\[
\begin{array}{ll}
 \delta^{m+1}(\phi\circ\varphi_{1})(x_{1},\ldots,x_{m+1},z)=&\sum\limits_{1\leq i< j\leq m+1}(-1)^{i}(\phi\circ\varphi_{1})(x_{1},\ldots,\widehat{x}_{i},\ldots,x_{j-1},[x_{i},x_{j}],\ldots,x_{m+1},z)\\
&+~\sum\limits_{i=1}^{m+1}(-1)^{i}(\phi\circ\varphi_{1})(x_{1},\ldots,\widehat{x}_{i},\ldots,x_{m+1},ad(x_{i}).z)
\\ & +~\sum\limits_{i=1}^{m+1}(-1)^{i+1}L'(x_{i}).(\phi\circ\varphi_{1})(x_{1},\ldots,\widehat{x}_{i},\ldots,x_{m+1},z)\\
&+~\sum\limits_{i=1}^{n-1}(-1)^{m}[\phi(x_{m+1}^{1}),\ldots,(\phi\circ\varphi_{1})(x_{1},\ldots,x_{m},x_{m+1}^{i}),
\ldots,\phi(x_{m+1}^{n-1}),\phi(z)]'\\
&=\phi\bigg(\sum\limits_{1\leq i< j\leq m+1}(-1)^{i}\varphi_{1}(x_{1},\ldots,\widehat{x}_{i},\ldots,x_{j-1},[x_{i},x_{j}],\ldots,x_{m+1},z)\\
&+~\sum\limits_{i=1}^{m+1}(-1)^{i}\varphi_{1}(x_{1},\ldots,\widehat{x}_{i},\ldots,x_{m+1},ad(x_{i}).z)\\
&+~\sum\limits_{i=1}^{m+1}(-1)^{i+1}ad(x_{i}).\varphi_{1}(x_{1},\ldots,\widehat{x}_{i},\ldots,x_{m+1},z)\\
&+~\sum\limits_{i=1}^{n-1}(-1)^{m}[x_{m+1}^{1},\ldots,\varphi_{1}(x_{1},\ldots,x_{m},x_{m+1}^{i}),\ldots,x_{m+1}^{n-1},z]\bigg)\\
&=\phi\circ\delta^{m+1}(\varphi_{1})(x_{1},\ldots,x_{m+1},z).\\[10pt]
\delta^{m+1}(\varphi_{2})\circ(\bar{\phi}^{\otimes (m+1)}\wedge\phi)(x_{1},\ldots,x_{m+1},z)&=\sum\limits_{1\leq i< j\leq m+1 }(-1)^{i}\varphi_{2}(\bar{\phi}(x_{1}),\ldots,\widehat{\bar{\phi}({x}_{i})},\ldots,\bar{\phi}(x_{j-1}),[\bar{\phi}(x_{i}),\bar{\phi}(x_{j})]',\ldots,
\bar{\phi}(x_{m+1}),\phi(z))\\
&+~\sum\limits_{i=1}^{m+1}(-1)^{i}\varphi_{2}(\bar{\phi}(x_{0}),\ldots,\widehat{\bar{\phi}({x}_{i})},\ldots,\bar{\phi}(x_{m+1}),L'(x_{i}).\phi(z))\\
&+~\sum\limits_{i=1}^{m+1}(-1)^{i+1}L'(x_{i}).\varphi_{2}(\bar{\phi}(x_{1}),\ldots,\widehat{\bar{\phi}({x}_{i})},\ldots,\bar{\phi}(x_{m+1}),\phi(z))\\
 &+~\sum\limits_{i=1}^{n-1}(-1)^{m}[\phi(x_{m+1}^{1}),\ldots,\varphi_{2}(\bar{\phi}(x_{1}),\ldots,\bar{\phi}(x_{m}),\phi(x_{m+1}^{i})),
\ldots,\phi(x_{m+1}^{n-1}),\phi(z)]'\\
&=\delta^{m+1}(\varphi_{2}\circ(\bar{\phi}^{\otimes m}\wedge\phi))(x_{1},\ldots,x_{m+1},z).
\end{array}
\]
\end{proof}

\begin{defn}
The corresponding cohomology modules of the cochain complex $(C^{*}(\phi,\phi),\delta^{n})$
are denoted by
$$H^{n+1}(\phi,\phi):=H^{n+1}(C^{*}(\phi,\phi),\delta).$$
\end{defn}
\begin{prop} 
If $H^{n+1}(\mathcal{N},\mathcal{N}),~H^{n+1}(\mathcal{N}',\mathcal{N}')$,
and $H^{n}(\mathcal{N},\mathcal{N}')$ are all trivial then so is $H^{n+1}(\phi,\phi)$.
\end{prop}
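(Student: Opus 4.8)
The plan is to prove that $H^{n+1}(\phi,\phi)=0$ directly, by showing that any $\delta$-cocycle in $C^{n}(\phi,\phi)$ is a $\delta$-coboundary, exploiting the triangular shape of the coboundary operator to kill the three components one at a time. So let $(\varphi_{1},\varphi_{2},\varphi_{3})\in C^{n}(\phi,\phi)$ with $\delta^{n+1}(\varphi_{1},\varphi_{2},\varphi_{3})=0$; reading off the three coordinates of the defining formula for $\delta^{n+1}$, this means precisely that $\delta^{n+1}\varphi_{1}=0$, $\delta^{n+1}\varphi_{2}=0$, and $\delta^{n}\varphi_{3}+(-1)^{n}\big(\phi\circ\varphi_{1}-\varphi_{2}\circ(\bar{\phi}^{\otimes n}\wedge\phi)\big)=0$.

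Next I would dispose of the first two components using the vanishing of $H^{n+1}(\mathcal{N},\mathcal{N})$ and $H^{n+1}(\mathcal{N}',\mathcal{N}')$. Since $\varphi_{1}$ and $\varphi_{2}$ are cocycles for \eqref{cohomology} in the respective complexes, there are $\psi_{1}\in\mathcal{C}^{n-1}(\mathcal{N},\mathcal{N})$ and $\psi_{2}\in\mathcal{C}^{n-1}(\mathcal{N}',\mathcal{N}')$ with $\varphi_{1}=\delta^{n}\psi_{1}$ and $\varphi_{2}=\delta^{n}\psi_{2}$. Subtracting the coboundary $\delta^{n}(\psi_{1},\psi_{2},0)$ from $(\varphi_{1},\varphi_{2},\varphi_{3})$ does not change its cohomology class, and by the formula for $\delta^{n}$ its first two coordinates are again $\varphi_{1}$ and $\varphi_{2}$, so the difference has the form $(0,0,\varphi_{3}')$ with $\varphi_{3}'=\varphi_{3}-(-1)^{n-1}\big(\phi\circ\psi_{1}-\psi_{2}\circ(\bar{\phi}^{\otimes(n-1)}\wedge\phi)\big)\in\mathcal{C}^{n-1}(\mathcal{N},\mathcal{N}')$. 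Because $(0,0,\varphi_{3}')$ is still a cocycle and the cross term in $\delta^{n+1}(0,0,\varphi_{3}')$ vanishes (its first two entries are zero), the last coordinate yields $\delta^{n}\varphi_{3}'=0$, i.e. $\varphi_{3}'$ is a cocycle for \eqref{cohomology2}.

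Finally I would invoke the remaining hypothesis $H^{n}(\mathcal{N},\mathcal{N}')=0$ to write $\varphi_{3}'=\delta^{n-1}\psi_{3}$ for some $\psi_{3}\in\mathcal{C}^{n-2}(\mathcal{N},\mathcal{N}')$; then $\delta^{n}(0,0,\psi_{3})=(0,0,\delta^{n-1}\psi_{3})=(0,0,\varphi_{3}')$, so $(\varphi_{1},\varphi_{2},\varphi_{3})=\delta^{n}(\psi_{1},\psi_{2},0)+\delta^{n}(0,0,\psi_{3})=\delta^{n}(\psi_{1},\psi_{2},\psi_{3})$ is a coboundary, whence $H^{n+1}(\phi,\phi)=0$. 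Conceptually this is nothing but the long exact cohomology sequence attached to the short exact sequence of complexes $0\to\mathcal{C}^{\bullet-1}(\mathcal{N},\mathcal{N}')\to\mathcal{C}^{\bullet}(\phi,\phi)\to\mathcal{C}^{\bullet}(\mathcal{N},\mathcal{N})\oplus\mathcal{C}^{\bullet}(\mathcal{N}',\mathcal{N}')\to0$ (the inclusion being $\varphi_{3}\mapsto(0,0,\varphi_{3})$, which is a chain map exactly because the cross term dies), which sandwiches $H^{n+1}(\phi,\phi)$ between $H^{n}(\mathcal{N},\mathcal{N}')$ and $H^{n+1}(\mathcal{N},\mathcal{N})\oplus H^{n+1}(\mathcal{N}',\mathcal{N}')$; I would likely include this remark for context. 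There is no genuine obstacle here — the only thing needing care is tracking the signs $(-1)^{m}$ when expanding $\delta^{n}(\psi_{1},\psi_{2},0)$ and $\delta^{n}(0,0,\psi_{3})$, together with the observation that once the first two slots are annihilated the third-slot differential is literally $\delta$ of \eqref{cohomology2} with no correction term.
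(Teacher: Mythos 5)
Your proof is correct, and it is essentially the argument the paper has in mind: the paper's own ``proof'' is just a citation to Proposition 3.3 of Yau's paper on dialgebra morphisms, whose proof is exactly this zig-zag (kill the first two components by coboundaries of $(\psi_1,\psi_2,0)$, observe the corrected third component is a cocycle for \eqref{cohomology2}, then kill it with $(0,0,\psi_3)$). Your sign bookkeeping and the observation that the cross term vanishes once the first two slots are zero are both right, so nothing further is needed.
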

\begin{proof}
The proof is similar to that of Proposition 3.3 in \cite{d}.
\end{proof}
\section{Deformations of $n$-Lie algebra morphisms}
In  this section, we aim  to study one parameter formal deformation of $n$-Lie algebra morphisms. Deformation theory using using formal power series was introduced first for associative algebras  by Gerstenhaber  \cite{Gerstenhaber} and then extended to Lie algebras by Nijenhuis and Richardson  \cite{NR}. 
Deformations of $n$-Lie algebras has been discussed in terms of
Chevalley-Eilenberg cohomology in many articles, see \cite{A} for a review.
Recall that the main idea is to change the
scalar field $\mathbb{K}$ to a formal power series ring $\mathbb{K}[\![t]\!]$, in one variable $t$, and the main results
provide cohomological interpretations.

Let $\mathbb{K}[\![t]\!]$ be the power series in one variable $t$ and coefficients in $\mathbb{K}$
and $\mathcal{N}[\![t]\!]$ be the set of formal series whose coefficients are elements of the vector
space $\mathcal{N}$, ($\mathcal{N}[\![t]\!]$ is obtained by extending the coefficients domain of $\mathcal{N}$
from $\mathbb{K}$ to $\mathbb{K}[\![t]\!]$). Given a $\mathbb{K}$-$n$-linear map
$\varphi:\mathcal{N}\times\ldots\mathcal{N}\rightarrow\mathcal{N}$, it admits naturally an extension to a
$\mathbb{K}[\![t]\!]$-$n$-linear map $\varphi:\mathcal{N}[\![t]\!]\times\ldots\times\mathcal{N}[\![t]\!]
\rightarrow \mathcal{N}[\![t]\!]$, that is, if $x_{i}=\sum\limits_{j\geq0}a_{i}^{j}t_{i}^j, 1\leq i \leq n$
then $\varphi(x_{1},\ldots,x_{n})=\sum\limits_{j_{1},\ldots,j_{n}\geq0}t^{j_{1}+\ldots+j_{n}}\varphi
(a_{1}^{j_{1}},\ldots,a_{n}^{j^{n}})$.

\begin{defn}
Let $(\mathcal{N},[\cdot,\ldots,\cdot])$ be a $n$-Lie algebra. A one-parameter formal deformation
of the $n$-Lie algebra $\mathcal{N}$ is given by a $\mathbb{K}[\![t]\!]$-$n$-linear map
$$[\cdot,\ldots,\cdot]_{t}:\mathcal{N}[\![t]\!]\times\ldots\times\mathcal{N}[\![t]\!]\rightarrow\mathcal{N}[\![t]\!]$$
of the form $[\cdot,\ldots,\cdot]_{t}=\sum\limits_{i\geq0}t^{i}[\cdot,\ldots,\cdot]_{i}$ where each $[\cdot,\ldots,\cdot]_{i}$
is a skew-symmetric $\mathbb{K}$-$n$-linear map $[\cdot,\ldots,\cdot]_{i}:\mathcal{N}\times\ldots \times\mathcal{N}
\rightarrow\mathcal{N}$
(extended to a $\mathbb{K}[\![t]\!]$-$n$-linear map), and $[\cdot,\ldots,\cdot]_{0}=[\cdot,\ldots,\cdot]$ such that for
$(x_{i})_{1\leq i \leq 2n-1}$
\begin{equation}\label{def}
[x_{1},\ldots,x_{n-1},[x_{n},\ldots,x_{2n-1}]_{t}]_{t}=\sum\limits_{i=n}^{2n-1}
[x_{n},\ldots,x_{i-1},[x_{1},\ldots,x_{n-1},x_{i}]_{t},x_{i+1},\ldots,x_{2n-1}]_{t}
\end{equation}
\end{defn}
The deformation is said to be of order $k$ if $[\cdot,\ldots,\cdot]_{t}=\sum\limits_{i=0}^{k}t^{i}[\cdot,\ldots,\cdot]_{i}$
and infinitesimal if $k=1$.
\begin{defn}
Let $\phi:\mathcal{N}\rightarrow \mathcal{N}'$ be a $n$-Lie algebra morphism.
Define a deformation of $\phi$ to be a triple \\$\Theta_{t}=([\cdot,\ldots,\cdot]_{\mathcal{N},t};[\cdot,\ldots,\cdot]_{\mathcal{N}',t};\phi_{t})$
in which :
\begin{itemize}
    \item $[\cdot,\ldots,\cdot]_{\mathcal{N},t}=\sum\limits_{i\geq0}t^{i}[\cdot,\ldots,\cdot]_{\mathcal{N},i}$ is a deformation of $\mathcal{N}$
    \item $[\cdot,\ldots,\cdot]_{\mathcal{N}',t}=\sum\limits_{i\geq0}t^{i}[\cdot,\ldots,\cdot]_{\mathcal{N}',i}$ is a deformation of $\mathcal{N}'$
    \item $\phi_{t}:\mathcal{N}[\![t]\!]\rightarrow \mathcal{N}'[\![t]\!]$ is
    a $n$-Lie algebra morphism of the form $\phi_{t}=\sum\limits_{n\geq0}\phi_{n}t^{n}$
    where each $\phi_{n} :\mathcal{N}\rightarrow \mathcal{N}'$ is a $\mathbb{K}$-linear map and
    $\phi_{0}=\phi$, such that $\phi_{t}$ satisfies the following equation
    $$\phi_{t}([x_{1},\ldots,x_{n}]_{\mathcal{N},t})=[\phi_{t}(x_{1}),\ldots,\phi_{t}(x_{n})]_{\mathcal{N}',t}.$$
\end{itemize}
\end{defn}
\begin{prop}
\label{nissma}
The linear coefficient, $\theta_{1} = ([., .]_{1}, [., .]_1', \phi_1)$, which is called the infinitesimal
of the deformation $\Theta_{t}$ of $\phi$
is a $2$-cocycle in $C^{2}(\phi,\phi)$.
\end{prop}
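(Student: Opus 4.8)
The plan is to split $\delta^{2}\theta_{1}$ into its three components, one in each factor of $\mathcal{C}^{2}(\phi,\phi)=\mathcal{C}^{2}(\mathcal{N},\mathcal{N})\otimes\mathcal{C}^{2}(\mathcal{N}',\mathcal{N}')\otimes\mathcal{C}^{1}(\mathcal{N},\mathcal{N}')$, and to identify each component with the coefficient of $t$ in one of the equations defining the deformation $\Theta_{t}$. First, note that $\theta_{1}=([\cdot,\ldots,\cdot]_{1},[\cdot,\ldots,\cdot]_{1}',\phi_{1})$ is indeed an element of $\mathcal{C}^{1}(\phi,\phi)=\mathcal{C}^{1}(\mathcal{N},\mathcal{N})\otimes\mathcal{C}^{1}(\mathcal{N}',\mathcal{N}')\otimes\mathcal{C}^{0}(\mathcal{N},\mathcal{N}')$: the first-order terms $[\cdot,\ldots,\cdot]_{1}$ and $[\cdot,\ldots,\cdot]_{1}'$ are skew-symmetric $n$-linear maps, hence $2$-cochains of $\mathcal{N}$ and of $\mathcal{N}'$, and $\phi_{1}:\mathcal{N}\to\mathcal{N}'$ is a $1$-cochain in $\mathcal{C}^{0}(\mathcal{N},\mathcal{N}')$. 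By the definition of the morphism coboundary with $m=1$ (so that $(-1)^{m}=-1$),
\[
\delta^{2}\theta_{1}=\Bigl(\delta^{2}[\cdot,\ldots,\cdot]_{1},\ \delta^{2}[\cdot,\ldots,\cdot]_{1}',\ \delta^{1}\phi_{1}-\bigl(\phi\circ[\cdot,\ldots,\cdot]_{1}-[\cdot,\ldots,\cdot]_{1}'\circ(\bar{\phi}^{\otimes 1}\wedge\phi)\bigr)\Bigr),
\]
so it suffices to show that each of the three entries vanishes.

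For the first two entries I would substitute $[\cdot,\ldots,\cdot]_{\mathcal{N},t}=[\cdot,\ldots,\cdot]+t[\cdot,\ldots,\cdot]_{1}+O(t^{2})$ into the deformation identity \eqref{def} and expand in powers of $t$. The coefficient of $t^{0}$ reproduces the Nambu identity \eqref{eq2} for $[\cdot,\ldots,\cdot]=[\cdot,\ldots,\cdot]_{0}$, which holds by hypothesis; matching the coefficient of $t^{1}$ against the coboundary formula \eqref{cohomology} with $p=1$ (after grouping the $2n-1$ arguments into two elements of $\mathcal{L}(\mathcal{N})$ and one element of $\mathcal{N}$) shows that the vanishing of this coefficient is exactly $\delta^{2}[\cdot,\ldots,\cdot]_{1}=0$. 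This is the classical cohomological characterisation of the infinitesimal of a deformation of an $n$-Lie algebra, for which I would refer to \cite{A}. Carrying out the same computation for the deformation $[\cdot,\ldots,\cdot]_{\mathcal{N}',t}$ of $\mathcal{N}'$ gives $\delta^{2}[\cdot,\ldots,\cdot]_{1}'=0$, which handles the second entry.

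For the third entry I would expand the morphism compatibility $\phi_{t}([x_{1},\ldots,x_{n}]_{\mathcal{N},t})=[\phi_{t}(x_{1}),\ldots,\phi_{t}(x_{n})]_{\mathcal{N}',t}$ in powers of $t$, using $\phi_{t}=\phi+t\phi_{1}+O(t^{2})$. The coefficient of $t^{0}$ merely records that $\phi=\phi_{0}$ is a morphism; the coefficient of $t^{1}$ is
\[
\phi\bigl([x_{1},\ldots,x_{n}]_{1}\bigr)+\phi_{1}\bigl([x_{1},\ldots,x_{n}]\bigr)=[\phi(x_{1}),\ldots,\phi(x_{n})]_{1}'+\sum_{i=1}^{n}[\phi(x_{1}),\ldots,\phi_{1}(x_{i}),\ldots,\phi(x_{n})]'.
\]
On the other hand, evaluating \eqref{cohomology2} with $m=0$ on $x_{1}\wedge\cdots\wedge x_{n-1}\in\mathcal{L}(\mathcal{N})$ and $x_{n}\in\mathcal{N}$ gives $\delta^{1}\phi_{1}(x_{1}\wedge\cdots\wedge x_{n-1},x_{n})=\sum_{i=1}^{n}[\phi(x_{1}),\ldots,\phi_{1}(x_{i}),\ldots,\phi(x_{n})]'-\phi_{1}([x_{1},\ldots,x_{n}])$, while by construction $(\phi\circ[\cdot,\ldots,\cdot]_{1})(x_{1}\wedge\cdots\wedge x_{n-1},x_{n})=\phi([x_{1},\ldots,x_{n}]_{1})$ and $([\cdot,\ldots,\cdot]_{1}'\circ(\bar{\phi}^{\otimes 1}\wedge\phi))(x_{1}\wedge\cdots\wedge x_{n-1},x_{n})=[\phi(x_{1}),\ldots,\phi(x_{n})]_{1}'$. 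Rearranging the displayed first-order equation therefore yields exactly $\delta^{1}\phi_{1}=\phi\circ[\cdot,\ldots,\cdot]_{1}-[\cdot,\ldots,\cdot]_{1}'\circ(\bar{\phi}^{\otimes 1}\wedge\phi)$, so the third entry of $\delta^{2}\theta_{1}$ vanishes, and hence $\delta^{2}\theta_{1}=0$.

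I expect the main obstacle to be the bookkeeping in the first two entries: the coefficient of $t$ in the $n$-ary Filippov identity \eqref{def} has to be reorganised so as to match the four sums appearing in \eqref{cohomology}, which is the $n$-Lie analogue of the standard Lie-algebra computation and can either be reproduced term by term or cited from \cite{A}. The morphism part, by contrast, is short and self-contained once $\delta^{1}$ is written out explicitly, and the sign $(-1)^{m}$ in the morphism coboundary is precisely what makes the two contributions to the third entry cancel.
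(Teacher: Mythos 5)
Your proposal is correct and follows essentially the same route as the paper: expand the morphism compatibility equation $\phi_{t}([x_{1},\ldots,x_{n}]_{t})=[\phi_{t}(x_{1}),\ldots,\phi_{t}(x_{n})]'_{t}$ to first order in $t$ and identify the resulting relation with the vanishing of the third component of $\delta^{2}\theta_{1}$. The only difference is one of completeness: the paper's proof treats only the morphism component and leaves the cocycle conditions $\delta^{2}[\cdot,\ldots,\cdot]_{1}=0$ and $\delta^{2}[\cdot,\ldots,\cdot]'_{1}=0$ implicit (as the standard fact about infinitesimals of $n$-Lie algebra deformations), whereas you spell out all three components and cite \cite{A} for the first two, which is the more careful presentation.
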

\begin{proof}
Let $\phi: \mathcal{N}\rightarrow\mathcal{N}'$ be a $n$-Lie algebra morphism, we have the following deformation equation
\begin{eqnarray*}
\phi_{t}([x_{1},\ldots,x_{n}]_{t})=[\phi_{t}(x_{1}),\ldots,\phi_{t}(x_{n})]'_{t}.
\end{eqnarray*}
Expanding this product in a power series in $t$, we obtain
\begin{eqnarray*}
\sum\limits_{i+j=s}\phi_{i}([x_{1},\ldots,x_{n}]_{j})=
\sum\limits_{i_{1}+\ldots+i_{n}+j=s}[\phi_{i_{1}}(x_{1}),\ldots,\phi_{i_{n}}(x_{n})]'_{j}.
\end{eqnarray*}
For $s=1$, we get
\[
\phi([x_{1},\ldots,x_{n}]_{1})-[\phi(x_{1}),\ldots,\phi(x_{n})]_{1}'
-\sum\limits_{i=1}^{n}[\phi(x_{1}),\ldots,\phi_{1}(x_{i}),\ldots,\phi(x_{n})]'
+\phi_{1}([x_{1},\ldots,x_{n}])=0.
\]
This is equivalent to the $2$-cochain
$\phi([x_{1},\ldots,x_{n}]_{1})-[\phi(x_{1}),\ldots,\phi(x_{n})]_{1}'
-\delta^{1}\phi_{1}$ is equal to $0$.
\end{proof}
\subsubsection{Equivalent deformations}
\begin{defn}
Let $(\mathcal{N},[\cdot ,\ldots,\cdot ])$ be a $n$-Lie algebra. Given two deformations
$\mathcal{N}_{t}=(\mathcal{N}[\![t]\!],[\cdot ,\ldots,\cdot ]_{t})$ and $\mathcal{N}'_{t}=(\mathcal{N}[\![t]\!],[\cdot ,\ldots,\cdot ]'_{t})$
of $\mathcal{N}$. 
We say that $\mathcal{N}_{t}$ and $\mathcal{N}'_{t}$ are equivalent if there exists a formal automorphism
$\psi_{t}:\mathcal{N}[\![t]\!]\rightarrow \mathcal{N}[\![t]\!]$ that may be written in the form $\psi_{t}=\sum\limits_{i\geq 0}t^{i}\psi_{i}$, where
$\psi_{i}\in End (\mathcal{N})$ and $\psi_{0}=Id$ such that
$$\psi_{t}([x_{1},\ldots,x_{n}]_{t})=[\psi_{t}(x_{1}),\ldots,\psi_{t}(x_{n})]'_{t}.$$
\end{defn}
\noindent A deformation $\mathcal{N}_{t}$ of $\mathcal{N}$ is said to be trivial if $\mathcal{N}_{t}$ is equivalent to
$\mathcal{N}$, viewed as a $n$-ary algebra on $\mathcal{N}[\![t]\!]$.\\
Let $(\mathcal{N},[\cdot ,\ldots,\cdot ])$ be a $n$-Lie algebra and $[\cdot ,\ldots,\cdot ]_{1}\in Z^{2}(\mathcal{N},\mathcal{N})$.
The $2$-cocycle $[\cdot ,\ldots,\cdot ]_{1}$ is said to be integrable if there exists a family
$([\cdot ,\ldots,\cdot ]_{i})_{i\geq0}$ such that $[\cdot ,\ldots,\cdot ]_{t}=\sum\limits_{i\geq0}t^{i}[\cdot ,\ldots,\cdot ]_{i}$
defines a formal deformation $\mathcal{N}_{t}=(\mathcal{N}[\![t]\!],[\cdot ,\ldots,\cdot ]_{t})$ of $\mathcal{N}$.
\begin{prop}\label{prop4.5}
If $\mathcal{N}_{t}$ and $\mathcal{N}'_{t}$ are equivalent deformations of $\mathcal{N}$
given by the automorphism $\psi_{\mathcal{N},t}:\mathcal{N}[\![t]\!]\rightarrow\mathcal{N}[\![t]\!]$,
the infinitesimals of $[\cdot ,\ldots,\cdot ]_{\mathcal{N},t}$ and $[\cdot ,\ldots,\cdot ]_{\mathcal{N}',t}$ belong to the same
cohomology class.
\end{prop}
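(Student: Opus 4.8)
The plan is to carry out the standard argument that equivalent deformations have cohomologous infinitesimals, adapting it to the $n$-ary setting. Suppose $\psi_{\mathcal{N},t}=\sum_{i\geq 0}t^i\psi_i$ with $\psi_0=\mathrm{Id}$ realizes the equivalence, so that
$$\psi_{\mathcal{N},t}([x_1,\ldots,x_n]_{\mathcal{N},t})=[\psi_{\mathcal{N},t}(x_1),\ldots,\psi_{\mathcal{N},t}(x_n)]_{\mathcal{N}',t}.$$
First I would expand both sides as power series in $t$ and collect the coefficient of $t^1$. On the left one gets $\psi_0([x_1,\ldots,x_n]_{\mathcal{N},1})+\psi_1([x_1,\ldots,x_n]_{\mathcal{N},0})$; on the right one gets $[x_1,\ldots,x_n]_{\mathcal{N}',1}+\sum_{i=1}^n[x_1,\ldots,\psi_1(x_i),\ldots,x_n]_{\mathcal{N}',0}$, using $\psi_0=\mathrm{Id}$ and $[\cdot,\ldots,\cdot]_{\mathcal{N}',0}=[\cdot,\ldots,\cdot]=[\cdot,\ldots,\cdot]_{\mathcal{N},0}$.

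Rearranging the $t^1$-identity gives
$$[x_1,\ldots,x_n]_{\mathcal{N},1}-[x_1,\ldots,x_n]_{\mathcal{N}',1}=\sum_{i=1}^n[x_1,\ldots,\psi_1(x_i),\ldots,x_n]-\psi_1([x_1,\ldots,x_n]).$$
The key step is to recognize the right-hand side as $\pm\,\delta^{2}\psi_1$, where $\psi_1\in \mathrm{End}(\mathcal{N})=C^{1}(\mathcal{N},\mathcal{N})$ (a $1$-cochain in the sense of the complex $(\mathcal{C}^{*}(\mathcal{N},\mathcal{N}),\delta)$ recalled in Section~1), and $\delta^2$ is the coboundary operator \eqref{cohomology} specialized to $p=1$. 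Indeed, for a $1$-cochain the first three sums in \eqref{cohomology} either are empty or reduce to the two adjoint-action terms, which for $\psi_1$ acting on a single element combine into the term $-\psi_1([x_1,\ldots,x_n])$ together with $\sum ad$-contributions, while the last sum $(-1)^{p}\sum_i[a^1_{p+1},\ldots,\psi(\ldots a^i_{p+1}\ldots),\ldots]$ produces exactly $\sum_i[x_1,\ldots,\psi_1(x_i),\ldots,x_n]$. So I would verify carefully that the displayed difference equals $\delta^{2}\psi_1$ up to sign, using the definition of the bracket $[x,y]$ on $\mathcal{L}(\mathcal{N})$ and of $ad$ from Section~1.

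Granting that identification, the infinitesimal $[\cdot,\ldots,\cdot]_{\mathcal{N},1}$ of the first deformation and $[\cdot,\ldots,\cdot]_{\mathcal{N}',1}$ of the second differ by the coboundary $\delta^2\psi_1\in B^2(\mathcal{N},\mathcal{N})$, hence represent the same class in $H^2(\mathcal{N},\mathcal{N})$, which is the assertion. The main obstacle is purely bookkeeping: matching the ad-hoc $n$-ary expression $\sum_i[x_1,\ldots,\psi_1(x_i),\ldots,x_n]-\psi_1([x_1,\ldots,x_n])$ with the formula \eqref{cohomology} for $\delta^2$ on a $1$-cochain, since the indexing conventions in \eqref{cohomology} (with the fundamental elements $a_i=(a_i^1,\ldots,a_i^{n-1})$ and a final slot $z\in\mathcal{N}$) require one to carefully unwind how a degree-one cochain $\psi_1\in\mathrm{End}(\mathcal{N})$ sits inside $C^{1}(\mathcal{N},\mathcal{N})$; once the sign $(-1)$ and the vanishing of the empty sums are pinned down, the rest is immediate.
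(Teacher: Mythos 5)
Your proposal is correct and is exactly the argument the paper has in mind (the paper only remarks that the proof is ``straightforward and similar to the case $n=2$''): expanding $\psi_{\mathcal{N},t}([x_1,\ldots,x_n]_{\mathcal{N},t})=[\psi_{\mathcal{N},t}(x_1),\ldots,\psi_{\mathcal{N},t}(x_n)]_{\mathcal{N}',t}$ at order $t^1$ and recognizing the difference of infinitesimals as the coboundary of $\psi_1$. Your only slip is in the degree bookkeeping: in the paper's convention $\psi_1\in\mathrm{End}(\mathcal{N})=C^{0}(\mathcal{N},\mathcal{N})$ and the relevant coboundary is $\delta^{1}\psi_1$ (formula \eqref{cohomology} with $p=0$, whose second, third and fourth sums give precisely $-\psi_1([x_1,\ldots,x_n])+\sum_{i=1}^{n}[x_1,\ldots,\psi_1(x_i),\ldots,x_n]$), not $\delta^{2}\psi_1$ with $\psi_1\in C^{1}$ --- a harmless relabeling that the paper itself is inconsistent about.
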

\begin{proof}
The proof is straightforward  and 
similar to that 
of the case $n=2$.
\end{proof}
\begin{defn}\label{DefMorphEquiv}
Let $\Theta_{t}=([\cdot ,\ldots,\cdot ]_{\mathcal{N},t},[\cdot ,\ldots,\cdot ]_{\mathcal{L},t},\phi_{t})$ and $\widetilde{\Theta}=
([\cdot ,\ldots,\cdot ]'_{\mathcal{N},t},[\cdot ,\ldots,\cdot ]'_{\mathcal{L},t},\widetilde{\phi}_{t})$ be two deformations of
a $n$-Lie algebra morphism $\phi:\mathcal{N}\rightarrow\mathcal{L}$.
A formal automorphism $:\Theta_{t}\rightarrow\widetilde{\Theta}_{t}$ is
a pair $(\psi_{\mathcal{N},t},\psi_{\mathcal{L},t})$, where $\psi_{\mathcal{N},t}:\mathcal{N}[\![t]\!]\rightarrow \mathcal{N}[\![t]\!]$
and $\psi_{\mathcal{L},t}:\mathcal{\mathcal{L}}[\![t]\!]\rightarrow \mathcal{L}[\![t]\!]$ are formal automorphisms, such that
$\widetilde{\phi}_{t}=\psi_{\mathcal{L},t}\circ\phi_{t}\circ\psi^{-1}_{\mathcal{N},t}$.
Two deformations $\Theta_{t}$ and $\widetilde{\Theta}_{t}$ are equivalent
if and only if there exists a formal automorphism $\Theta_{t}\rightarrow\widetilde{\Theta}_{t}$.
\end{defn}
Given a deformation $\Theta_{t}$ and a pair of power series
$\psi_{t}=(\psi_{\mathcal{N},t}=\sum\limits_{n}\psi_{\mathcal{N},n}t^{n},\psi_{\mathcal{L},t}=\sum\limits_{n}\psi_{\mathcal{L},n}t^{n})$,
one can define a deformation $\widetilde{\Theta}_{t}$ which
is automatically equivalent to $\Theta_{t}$.
\begin{thm} Let $(\mathcal{N},[\cdot ,\ldots,\cdot ]_{\mathcal{N}})$ and $(\mathcal{N}',[\cdot ,\ldots,\cdot ]_{\mathcal{N}'})$
be two $n$-Lie algebras. Let $\Theta_{t}=([\cdot ,\ldots,\cdot ]_{\mathcal{N},t},[\cdot ,\ldots,\cdot ]_{\mathcal{N}',t},\phi_{t})$ be a
deformation of a $n$-Lie algebra morphism $\phi: \mathcal{N}\rightarrow\mathcal{N}'$. Then
\begin{enumerate}
  \item The infinitesimal of a deformation $\Theta_{t}$ of $\phi$ is a $2$-cocycle in
$C^{2}(\phi,\phi)$ whose cohomology class is determined by the equivalence class of $\Theta_{t}$.
  \item There exists an equivalent deformation $\widetilde{\Theta}_{t}=([\cdot ,\ldots,\cdot ]_{\mathcal{N},t},
[\cdot ,\ldots,\cdot ]_{\mathcal{N}',t},\widetilde{\phi}_t)$ such that
$\widetilde{\theta}_{1}\in Z^{2}(\phi,\phi)$ and $\widetilde{\theta}_{1}
\not\in B^{2}(\phi,\phi)$. Hence, if $H^{2}(\phi,\phi)=0$ then every formal deformation is
equivalent to a trivial deformation.
\end{enumerate}
\end{thm}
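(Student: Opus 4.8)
The plan is to establish the cocycle property of $\theta_{1}$ component by component, and then to transport the infinitesimal inside its cohomology class by a formal automorphism. Write $\theta_{1}=([\cdot,\ldots,\cdot]_{\mathcal{N},1},[\cdot,\ldots,\cdot]_{\mathcal{N}',1},\phi_{1})$. Expanding the deformation equation~\eqref{def} for $[\cdot,\ldots,\cdot]_{\mathcal{N},t}$, and likewise for $[\cdot,\ldots,\cdot]_{\mathcal{N}',t}$, at order $t$ yields, by the classical computation for deformations of $n$-Lie algebras recalled in~\cite{A}, that $[\cdot,\ldots,\cdot]_{\mathcal{N},1}\in Z^{2}(\mathcal{N},\mathcal{N})$ and $[\cdot,\ldots,\cdot]_{\mathcal{N}',1}\in Z^{2}(\mathcal{N}',\mathcal{N}')$; Proposition~\ref{nissma} provides the remaining relation $\phi\circ[\cdot,\ldots,\cdot]_{\mathcal{N},1}-[\cdot,\ldots,\cdot]_{\mathcal{N}',1}\circ(\bar{\phi}^{\otimes 1}\wedge\phi)-\delta^{1}\phi_{1}=0$. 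Since the coboundary on $C^{*}(\phi,\phi)$ from Section~3 sends a triple $(\varphi_{1},\varphi_{2},\varphi_{3})$ to $(\delta\varphi_{1},\delta\varphi_{2},\delta\varphi_{3}\pm(\phi\circ\varphi_{1}-\varphi_{2}\circ(\bar{\phi}^{\otimes\bullet}\wedge\phi)))$, these three identities say precisely that the three components of $\delta\theta_{1}$ vanish, i.e.\ $\theta_{1}\in Z^{2}(\phi,\phi)$.

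\textbf{Part (1), dependence on the equivalence class only.} Let $(\psi_{\mathcal{N},t},\psi_{\mathcal{N}',t})$ be a formal automorphism $\Theta_{t}\to\widetilde{\Theta}_{t}$ as in Definition~\ref{DefMorphEquiv}, with $\psi_{\mathcal{N},0}=\psi_{\mathcal{N}',0}=\mathrm{Id}$ and linear parts $\psi_{\mathcal{N},1}\in\mathrm{End}(\mathcal{N})$, $\psi_{\mathcal{N}',1}\in\mathrm{End}(\mathcal{N}')$. Taking order-$t$ parts of the defining relations: the bracket conjugations show that the order-$t$ terms of the two deformations in $\widetilde{\Theta}_{t}$ differ from $[\cdot,\ldots,\cdot]_{\mathcal{N},1}$ and $[\cdot,\ldots,\cdot]_{\mathcal{N}',1}$ by $\delta^{2}\psi_{\mathcal{N},1}$ and $\delta^{2}\psi_{\mathcal{N}',1}$ (this is Proposition~\ref{prop4.5}), while $\widetilde{\phi}_{t}=\psi_{\mathcal{N}',t}\circ\phi_{t}\circ\psi_{\mathcal{N},t}^{-1}$ gives $\widetilde{\phi}_{1}=\phi_{1}+\psi_{\mathcal{N}',1}\circ\phi-\phi\circ\psi_{\mathcal{N},1}$. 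A direct check against the coboundary formula on $C^{*}(\phi,\phi)$ then gives $\widetilde{\theta}_{1}-\theta_{1}=\delta(\psi_{\mathcal{N},1},\psi_{\mathcal{N}',1})$, so $\widetilde{\theta}_{1}$ and $\theta_{1}$ define the same class in $H^{2}(\phi,\phi)$. This completes part~(1).

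\textbf{Part (2).} If $\theta_{1}=\delta(\psi_{\mathcal{N},1},\psi_{\mathcal{N}',1})\in B^{2}(\phi,\phi)$, running the previous computation with the formal automorphism $\psi_{\mathcal{N},t}=\mathrm{Id}-t\,\psi_{\mathcal{N},1}$, $\psi_{\mathcal{N}',t}=\mathrm{Id}-t\,\psi_{\mathcal{N}',1}$ produces an equivalent deformation $\widetilde{\Theta}_{t}$ with $\widetilde{\theta}_{1}=\theta_{1}-\delta(\psi_{\mathcal{N},1},\psi_{\mathcal{N}',1})=0$; hence, up to equivalence, the infinitesimal may always be taken to be $0$ or a cocycle not lying in $B^{2}(\phi,\phi)$. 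For the final assertion, assume $H^{2}(\phi,\phi)=0$ and let $k\geq1$ be the order of the lowest nonvanishing term $\theta_{k}$ of $\Theta_{t}$. The order-$t^{k}$ parts of the three equations show, by the same algebra as in the first paragraph (all lower terms being zero), that $\theta_{k}\in Z^{2}(\phi,\phi)=B^{2}(\phi,\phi)$; writing $\theta_{k}=\delta(\psi_{\mathcal{N},k},\psi_{\mathcal{N}',k})$ and conjugating by $\mathrm{Id}-t^{k}\psi_{\mathcal{N},k}$ and $\mathrm{Id}-t^{k}\psi_{\mathcal{N}',k}$ yields an equivalent deformation whose lowest nonvanishing term has order at least $k+1$. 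Iterating and passing to the $t$-adic limit on $\mathcal{N}[\![t]\!]$ and $\mathcal{N}'[\![t]\!]$ produces a formal automorphism carrying $\Theta_{t}$ to the trivial deformation.

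\textbf{Expected main obstacle.} The conceptual content is light; the effort lies in bookkeeping of the third, $\mathcal{N}'$-valued, component. The step to watch is checking that $\widetilde{\phi}_{1}-\phi_{1}=\psi_{\mathcal{N}',1}\circ\phi-\phi\circ\psi_{\mathcal{N},1}$ agrees with the third component of $\delta(\psi_{\mathcal{N},1},\psi_{\mathcal{N}',1})$ — this is where the precise form of $\delta$ on $C^{*}(\phi,\phi)$ and the morphism identity $\phi([x_{1},\ldots,x_{n}])=[\phi(x_{1}),\ldots,\phi(x_{n})]'$ enter — and, for the last statement, that the successive conjugations converge $t$-adically, so that their infinite composite is a genuine formal automorphism.
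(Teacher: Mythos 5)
Your proposal is correct and follows essentially the same route as the paper: part (1) reduces to Proposition \ref{nissma} together with Proposition \ref{prop4.5} and the identity $\phi_{1}-\widetilde{\phi}_{1}=\phi\circ\psi_{\mathcal{N},1}-\psi_{\mathcal{N}',1}\circ\phi$, exhibiting $\theta_{1}-\widetilde{\theta}_{1}$ as the coboundary of $(\psi_{\mathcal{N},1},\psi_{\mathcal{N}',1},0)$, exactly as in the paper. Your part (2) is in fact more complete than the paper's rather terse version, since you spell out the standard iterative killing of the lowest-order term and its $t$-adic convergence, which the paper only gestures at.
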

\begin{proof}
\begin{enumerate}
  \item By Proposition \ref{nissma}, $\theta_{1}$ is a $2$-cocycle. Now, if $\psi_{t}$ is a formal automorphism, then the $2$-cocycle $\theta_{1}$
and $\widetilde{\theta}_{1}$ differ by a $2$-coboundary. Write $\psi_{t}=(\psi_{\mathcal{N},t},\psi_{\mathcal{L},t})$
and $\widetilde{\theta}_{t}=([\cdot,\ldots,\cdot]'_{\mathcal{N},t},[\cdot,\ldots,\cdot]'_{\mathcal{L},t},\widetilde{\phi}_{t})$.
In view of Proposition \ref{prop4.5}, we have $\delta^{1}\psi_{*,1}=[\cdot,\ldots,\cdot]_{*,1}-[\cdot,\ldots,\cdot]'_{*,1}\in C^{2}(*,*)$,
for $*\in\{\mathcal{N},\mathcal{L}\}$. Moreover,
$\phi_{1}-\widetilde{\phi}_{1}=\phi\circ\psi_{\mathcal{N},1}-\psi_{\mathcal{L},1}\circ\phi$, then
$\theta_{1}-\widetilde{\theta}_{1}=\delta^{1}\alpha$, with $\alpha=(\psi_{\mathcal{N},1},\psi_{\mathcal{L},1},0)$.
  \item Define a pair of power series $\psi_{t}=(\psi_{\mathcal{N},t},\psi_{\mathcal{L},t})$.  According to Definition \ref{DefMorphEquiv},
  we  define equivalent deformation $\widetilde{\Theta}=
([\cdot,\ldots,\cdot]'_{\mathcal{N},t},[\cdot ,\ldots,\cdot ]'_{\mathcal{L},t},\widetilde{\phi}_{t})
  =\sum\limits_{n}\widetilde{\theta}_{n}t^{n}$.
We have $[\cdot,\ldots,\cdot]_{*,1}\in Z^{2}(*,*)$ and also $[\cdot,\ldots,\cdot]_{*,1}-[\cdot,\ldots,\cdot]'_{*,1}
\in Z^{2}(*,*)$ for $*\in\{\mathcal{N},\mathcal{L}\}$. Moreover  $\phi_{1}\in Z^{1}(\mathcal{N},\mathcal{L})$
leads to $\phi_{1}-\widetilde{\phi}_{1}\in Z^{1}(\mathcal{N},\mathcal{L})$. If $\widetilde{\theta}_{1}
\in B^{2}(\phi,\phi)$ then so
$\theta_{1}-\widetilde{\theta}_{1}=\delta^{1}\varphi$ for $\varphi\in C^{1}(\phi,\phi)$.
\end{enumerate}
\end{proof}
\subsection{Obstructions}
Let $(\mathcal{N},[\cdot ,\ldots,\cdot ])$ and $(\mathcal{N}',[\cdot ,\ldots,\cdot ]')$ be  two $n$-Lie algebras
and let $\phi: \mathcal{N}\rightarrow\mathcal{N}'$ be a morphism of $n$-Lie algebras. A deformation of order $N$ of $\phi$ is  a triple,
$\Theta_{t}=([\cdot,\ldots,\cdot]_{t};[\cdot,\ldots,\cdot]'_{t};\phi_{t})$
satisfying $\phi_{t}([x_{1},\ldots,x_{n}]_{t})=[\phi_{t}(x_{1}),\ldots,\phi_{t}(x_{n})]'_{t}$
or equivalently
$$\sum\limits_{i+j=N}\phi_{i}\circ[x_{1},\ldots,x_{n}]_{j}=\sum\limits_{i_{1}+\cdots+i_{n}+j=N}
[\phi_{i_{1}}(x_{1}),\ldots,\phi_{i_{n}}(x_{n})]'_{j}.$$
Given a deformation $\Theta_{t}$ of order $N$, it is said to extend to order $N+1$
if and only if there exists a $2$-cochain $\theta_{N+1}=([\cdot ,\ldots,\cdot ]_{N+1},[\cdot ,\ldots,\cdot ]_{N+1}',\phi_{N+1})
\in C^{2}(\phi,\phi)$ such that $\overline{\Theta}_{t}=\Theta_{t}+t^{N+1}\theta_{N+1}$
is a deformation of order $N+1$.
$\overline{\Theta}_{t}$ is called an order $N+1$ extension of $\Theta_{t}$.
Now, for $x_i=(x_i^1,\ldots,x_{i}^{n-1})\in\mathcal{L}(\mathcal{N})$ and $z\in\mathcal{N}$, 
the $(N+1)$-equation of \eqref{def} $\hbox{can be written as}$
\begin{equation}
\label{nizar}
\begin{array}{lllll}
\delta^{3}\left([\cdot ,\ldots,\cdot ]_{N+1}\right)(x_1,x_2,z)&=&-\sum\limits_{\begin{subarray}{l}k+l=N+1\\~~k,l>0
\end{subarray}}[x_{1}^{1},\ldots,x_{1}^{n-1},[x_{2}^{1},\ldots,x_{2}^{n-1},z]_{k}]_{l}
+\sum\limits_{\begin{subarray}{l}k+l=N+1\\~~k,l>0
\end{subarray}}[x_{2}^{1},\ldots,x_{2}^{n-1},[x_{1}^{1},\ldots,x_{1}^{n-1},z]_{k}]_{l}
\\[1pt]
& &+
\sum\limits_{\begin{subarray}{l}k+l=N+1\\~~k,l>0
\end{subarray}}\sum\limits_{i=1}^{n-1}[x_{2}^{1},\ldots,x_{2}^{i-1},
[x_{1}^{1},\ldots,x_{1}^{n-1},x_{2}^{i}]_{k},x_{2}^{i+1},\ldots,
x_{2}^{n-1},z]_{l}.
\end{array}
\end{equation}
Set $\mathcal{O}b_{\mathcal{N}}$ to be the  right hand side of (\ref{nizar}) for  the obstruction of a deformation of a $n$-Lie algebra $\mathcal{N}$. Similarly,
set
\begin{eqnarray*}
\mathcal{O}b_{\mathcal{N}'}=&&-\sum\limits_{\begin{subarray}{l}k+l=N+1\\~~k,l>0
\end{subarray}}[y_{1}^{1},\ldots,y_{1}^{n-1},[y_{2}^{1},\ldots,y_{2}^{n-1},z']'_{k}]'_{l}
+\sum\limits_{\begin{subarray}{l}k+l=N+1\\~~k,l>0
\end{subarray}}[y_{2}^{1},\ldots,y_{2}^{n-1},[y_{1}^{1},\ldots,y_{1}^{n-1},z']'_{k}]'_{l}
\\
\nonumber
&&+
\sum\limits_{\begin{subarray}{l}k+l=N+1\\~~k,l>0
\end{subarray}}\sum\limits_{i=1}^{n-1}[y_{2}^{1},\ldots,y_{2}^{i-1},
[y_{1}^{1},\ldots,y_{1}^{n-1},y_{2}^{i}]'_{k},y_{2}^{i+1},\ldots,
y_{2}^{n-1},z']'_{l},
\end{eqnarray*}
where $y_i=(y_i^1,\ldots,y_{i}^{n-1})\in\mathcal{L}(\mathcal{N}'),~z'\in\mathcal{N}$, for  the obstruction of a deformation of a $n$-Lie algebra $\mathcal{N}'$.
On the other side, the deformation equation associated to $\phi$ is
  $\phi_{t}\circ[x_{1},\ldots,x_{n}]_{t}=[\phi_{t}(x_{1}),\ldots,\phi_{t}(x_{n})]'_{t}$
which is equivalent to
\begin{equation*}
 \sum\limits_{i+j=N+1}\phi_{i}\circ[x_{1},\ldots,x_{n}]_{j}=\sum\limits_{i_{1}+\cdots+i_{n}+j=N+1}
 [\phi_{i_{1}}(x_{1}),\ldots,\phi_{i_{n}}(x_{n})]'_{j}.
\end{equation*}
For an arbitrary $N>0$, the $(N+1)$-equation may be written as follows:
$$\delta^{1}\phi_{N+1}(x_{1},\ldots,x_{n})-\phi[x_{1},\ldots,x_{n}]_{N+1}+[\phi(x_{1}),\ldots,\phi(x_{n})]'_{N+1}
=\sum\limits_{\begin{subarray}{l}i+j=N+1\\~~i,j>0\end{subarray}}
\phi_{i}\circ[x_{1},\ldots,x_{n}]_{j}-\sum'[\phi_{i_{1}}(x_{1}),\cdots,\phi_{i_{n}}(x_{n})]_{j}$$
with
\begin{equation*}
  \sum'=\sum\limits_{j=1}^{N}\sum\limits_{\begin{subarray}{l}~~l_{i}>0\\1\leq i\leq n\end{subarray}}
 +\sum\limits_{j=1}^{N}
  \sum\limits_{\begin{subarray}{l}l_{1}+\cdots+\widehat{l}_{i}+\cdots+l_{n}>0\\~~~~~l_{i}>0\\~~~1\leq i \leq n \end{subarray}}
 +\sum\limits_{i=1}^{n}
  \sum\limits_{\begin{subarray}{l}l_{i}+\cdots+\widehat{l}_{i}+\cdots+
  l_{n}=N+1-l_{i}\\~~~~~~~~~~l_{i}>0\\~~~~~~~~ 1\leq i \leq n.\end{subarray}}.
\end{equation*}
Set $$\mathcal{O}b_{\phi}=\sum\limits_{\begin{subarray}{l}i+j=N+1\\~~i,j>0\end{subarray}}
\phi_{i}\circ[x_{1},\ldots,x_{n}]_{j}-\sum'[\phi_{i_{1}}(x_{1}),\cdots,\phi_{i_{n}}(x_{n})]'_{j}$$ for the obstruction of the extension of the
$n$-Lie algebra morphism $\phi$.
\vspace{0.3cm}

\begin{thm}
Let $(\mathcal{N},[\cdot ,\ldots,\cdot ])$ and $(\mathcal{N}',[\cdot ,\ldots,\cdot ]')$ be 
two $n$-Lie algebras and $\phi$ a $n$-Lie algebra morphism.
Let $\Theta_{t}=([\cdot ,\ldots,\cdot ]_{t},[\cdot ,\ldots,\cdot ]_{t}',\phi_{t})$
be an order $N$ one-parameter formal deformation of $\phi$.
Then
\[\mathcal{O}b=(\mathcal{O}b_{\mathcal{N}},\mathcal{O}b_{\mathcal{N}'},
\mathcal{O}b_{\phi})\in Z^{3}(\phi,\phi).
\]
Therefore the deformation extends to a deformation of order $N+1$ if and only if
$\mathcal{O}b$ is a coboundary.
\end{thm}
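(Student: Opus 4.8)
The plan is to prove the two assertions in order: first that $\mathcal{O}b$ is a $3$-cocycle in $C^{*}(\phi,\phi)$, and then the extension criterion, which becomes almost formal once the cocycle property is in hand. Unravelling the definition of the coboundary on $C^{*}(\phi,\phi)$ given right after \eqref{do}, the equality $\delta\mathcal{O}b=0$ is equivalent to the three componentwise identities
\[
\delta\,\mathcal{O}b_{\mathcal{N}}=0,\qquad
\delta\,\mathcal{O}b_{\mathcal{N}'}=0,\qquad
\delta\,\mathcal{O}b_{\phi}+\phi\circ\mathcal{O}b_{\mathcal{N}}-\mathcal{O}b_{\mathcal{N}'}\circ(\bar{\phi}^{\otimes 2}\wedge\phi)=0
\]
(the sign in the last one being the one prescribed by \eqref{do}), where the first two $\delta$'s are computed by \eqref{cohomology} in $C^{*}(\mathcal{N},\mathcal{N})$ and $C^{*}(\mathcal{N}',\mathcal{N}')$ and the third by \eqref{cohomology2}. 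So the task splits into two ``pure'' statements and one ``mixed'' statement.

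First I would dispose of the two pure identities. They assert exactly that the order-$N$ deformation obstruction of a single $n$-Lie algebra is a $3$-cocycle, which is the classical fact at the heart of the deformation theory of $n$-Lie algebras (see the survey \cite{A}). The mechanism is the usual one: expand $\delta\,\mathcal{O}b_{\mathcal{N}}$ with \eqref{cohomology} and repeatedly feed in the Nambu identities \eqref{def} at orders $k\le N$ --- legitimate because $\Theta_{t}$ is a genuine deformation of order $N$ --- after which every term is killed by a partner; conceptually this is the graded Jacobi identity for the bracket on cochains governing $n$-Lie deformations, which can also be extracted from the Leibniz-algebra structure on $\mathcal{L}(\mathcal{N})$ recalled in Section~1. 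Should a self-contained argument be preferred, the very same cancellation pattern as in the proof of Proposition~\ref{compatibility} carries it out.

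The genuinely new point is the mixed identity, for which I would follow the template of Fr\'egier \cite{f}. Starting from the morphism equations at orders $s\le N$, namely $\sum_{i+j=s}\phi_{i}\circ[x_{1},\ldots,x_{n}]_{j}=\sum_{i_{1}+\cdots+i_{n}+j=s}[\phi_{i_{1}}(x_{1}),\ldots,\phi_{i_{n}}(x_{n})]'_{j}$, together with the lower-order Nambu identities on $\mathcal{N}$ and $\mathcal{N}'$, I would compute $\delta\,\mathcal{O}b_{\phi}$ directly from \eqref{cohomology2} and from the explicit expression for $\mathcal{O}b_{\phi}$, and then reorganise the resulting multiple sum so that the part carrying the deformed bracket of $\mathcal{N}$ collapses to $-\phi\circ\mathcal{O}b_{\mathcal{N}}$ and the part carrying the deformed bracket of $\mathcal{N}'$ collapses to $+\mathcal{O}b_{\mathcal{N}'}\circ(\bar{\phi}^{\otimes 2}\wedge\phi)$. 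No new algebraic identity is needed here: the two compatibilities already isolated in the proof that $(C^{*}(\phi,\phi),\delta)$ is a complex, namely $\delta(\phi\circ\varphi)=\phi\circ\delta\varphi$ and $\delta\bigl(\varphi\circ(\bar{\phi}^{\otimes m}\wedge\phi)\bigr)=\delta\varphi\circ(\bar{\phi}^{\otimes(m+1)}\wedge\phi)$, are precisely what permits moving $\phi$ and $\bar{\phi}$ across the coboundary during this regrouping. What is left is bookkeeping: matching signs and, above all, reconciling the index ranges of the three-fold sum $\sum'$ appearing in $\mathcal{O}b_{\phi}$ with the ranges generated by $\delta$ applied to the other two components. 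I expect this sign-and-range bookkeeping to be the main obstacle of the proof, just as the analogous componentwise cancellations were the technical core of Proposition~\ref{compatibility}.

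Finally, granting $\mathcal{O}b\in Z^{3}(\phi,\phi)$, the extension statement is essentially a restatement of the definitions. A triple $\theta_{N+1}=([\cdot,\ldots,\cdot]_{N+1},[\cdot,\ldots,\cdot]'_{N+1},\phi_{N+1})$ yields a deformation $\overline{\Theta}_{t}=\Theta_{t}+t^{N+1}\theta_{N+1}$ of order $N+1$ if and only if, collecting the coefficient of $t^{N+1}$ in the two Nambu identities and in the morphism identity and separating off the terms linear in $\theta_{N+1}$, one obtains the three equations $\delta[\cdot,\ldots,\cdot]_{N+1}=\mathcal{O}b_{\mathcal{N}}$ (which is exactly \eqref{nizar}), $\delta[\cdot,\ldots,\cdot]'_{N+1}=\mathcal{O}b_{\mathcal{N}'}$, and $\delta^{1}\phi_{N+1}(x_{1},\ldots,x_{n})-\phi[x_{1},\ldots,x_{n}]_{N+1}+[\phi(x_{1}),\ldots,\phi(x_{n})]'_{N+1}=\mathcal{O}b_{\phi}$ --- in other words $\delta\theta_{N+1}=\mathcal{O}b$. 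Hence such a $\theta_{N+1}$ exists precisely when $\mathcal{O}b\in\delta\bigl(C^{2}(\phi,\phi)\bigr)=B^{3}(\phi,\phi)$, i.e.\ when the (automatically closed, by the first part) cochain $\mathcal{O}b$ is a coboundary; equivalently, the order-$N$ deformation extends iff the obstruction class of $\mathcal{O}b$ in $Z^{3}(\phi,\phi)/B^{3}(\phi,\phi)$ vanishes.
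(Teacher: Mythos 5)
Your proposal is correct and follows essentially the same route as the paper: the same componentwise decomposition of $\delta^{3}\mathcal{O}b$, the same appeal to the single-algebra deformation theory for $\delta^{3}\mathcal{O}b_{\mathcal{N}}=\delta^{3}\mathcal{O}b_{\mathcal{N}'}=0$, the same direct expansion of $\delta^{2}\mathcal{O}b_{\phi}$ with the leftover terms killed by the Nambu identity, and the same reading of the order-$(N+1)$ equations as $\delta^{2}\theta_{N+1}=\mathcal{O}b$ for the extension criterion. No gaps beyond the sign-and-index bookkeeping that the paper itself only sketches.
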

\begin{proof}
We must show that $(\mathcal{O}b_{\mathcal{N}},
\mathcal{O}b_{\mathcal{N}},\mathcal{O}b_{\phi})$ is
a cocycle in $C^{3}(\phi,\phi)$ i.e.
$\delta^{3}(\mathcal{O}b_{\mathcal{N}},\mathcal{O}b_{\mathcal{N}'},
\mathcal{O}b_{\phi})=0$ i.e. $(\delta^{3}\mathcal{O}b_{\mathcal{N}},
\delta^{3}\mathcal{O}b_{\mathcal{N}'},\delta^{2}\mathcal{O}b_{\phi}+
\phi\circ\mathcal{O}b_{\mathcal{N}}-\mathcal{O}b_{\mathcal{N}'}\circ(\bar{\phi}^{\otimes 2}\wedge\phi))=0$.
One has already that $\delta^{3}\mathcal{O}b_{\mathcal{N}}=0$ and
$\delta^{3}\mathcal{O}b_{\mathcal{N}'}=0$, then it remains to show
that $$\delta^{2}\mathcal{O}b_{\phi}+\phi\circ\mathcal{O}b_{\mathcal{N}}
-\mathcal{O}b_{\mathcal{N}'}\circ(\bar{\phi}^{\otimes 2}\wedge\phi)=0.$$
Let $x_{i}=x_{i}^{1}\wedge\ldots\wedge x_{i}^{n-1}\in \mathcal{L}(\mathcal{N})$
and $z\in \mathcal{N}$. We have
\[
\begin{array}{llllllllllll}
&\delta^{2}Ob_\phi(x_{1},x_{2},z)=
-\sum\limits_{i=1}^{n-1}\sum\limits_{\begin{subarray}{l}k+l=N+1\\k,l>0\end{subarray}}
\phi_{k}\circ[x_{2}^{1},\ldots,[x_{1}^{1},\ldots,x_{1}^{n-1},x_{2}^{i}],\ldots,x_{2}^{n-1},z]_{l}
-\sum\limits_{\begin{subarray}{l}k+l=N+1\\k,l>0\end{subarray}}
\phi_{k}\circ[x_{2}^{1},\ldots,x_{2}^{n-1},[x_{1}^{1},\ldots,x_{1}^{n-1},z]]_{l}\\
&
+\sum\limits_{\begin{subarray}{l}k+l=N+1\\k,l>0\end{subarray}}
\phi_{k}\circ[x_{1}^{1},\ldots,x_{1}^{n-1},[x_{2}^{1},\ldots,x_{2}^{n-1},z]]_{l}
+
\sum\limits_{\begin{subarray}{l}k+l=N+1\\k,l>0\end{subarray}}
[\phi(x_{1}^{1}),\ldots,\phi(x_{1}^{n-1}),\phi_{k}[x_{2}^{1},\ldots,x_{2}^{n-1},z]_{l}]'\\
&
-\sum\limits_{\begin{subarray}{l}k+l=N+1\\k,l>0\end{subarray}}
[\phi(x_{2}^{1}),\ldots,\phi(x_{2}^{n-1}),\phi_{k}[x_{1}^{1},\ldots,x_{1}^{n-1},z]_{l}]
-\sum\limits_{i=1}^{n-1}\sum\limits_{\begin{subarray}{l}k+l=N+1\\k,l>0\end{subarray}}
[\phi(x_{2}^{1}),\ldots,\phi_{l_{k}}[x_{1}^{1},\ldots,x_{1}^{n-1},x_{2}^{i}]_{l},\ldots,\phi(x_{2}^{n-1}),\phi(z)]'\\
&
+\sum\limits_{i=1}^{n-1}\sum'
[\phi_{l_{1}}(x_{2}^{1}),\ldots,\phi_{l_{i}}[x_{1}^{1},\ldots,x_{1}^{n-1},x_{2}^{i}],\ldots,\phi_{l_{n-1}}(x_{2}^{n-1}),\phi(z)]'_{l}\\
&
-\sum'
[\phi_{l_{1}}(x_{1}^{1}),\ldots,\phi_{l_{n-1}}(x_{1}^{n-1}),\phi_{l_{n}}[x_{2}^{1},\ldots,x_{2}^{n-1},z]]'_{j}
+\sum'
[\phi_{l_{1}}(x_{2}^{1}),\ldots,\phi_{l_{n-1}}(x_{2}^{n-1}),\phi_{l_{n}}[x_{1}^{1},\ldots,x_{1}^{n-1},z]]'_{j}\\
&
-\sum'
[\phi(x_{1}^{1}),\ldots,\phi(x_{1}^{n-1}),[\phi_{l_{1}}(x_{2}^{1}),\ldots,\phi_{l_{n-1}}(x_{2}^{n-1}),\phi_{l_{n}}(z)]'_{j}]
+\sum'
[\phi(x_{2}^{1}),\ldots,\phi(x_{2}^{n-1}),[\phi_{l_{1}}(x_{1}^{1}),\ldots,\phi_{l_{n-1}}(x_{1}^{n-1}),\phi_{l_{n}}(z)]'_{j}]'\\
&
+\sum'
[\phi(x_{2}^{1}),\ldots,[\phi_{l_{1}}(x_{1}^{1}),\ldots,\phi_{l_{n-1}}(x_{1}^{n-1}),\phi_{l_{n}}(x_{2}^{i})]'_{j},
\ldots,\phi(x_{2}^{n-1}),\phi(z)]'.
\end{array}
\]
Now, we search the terms $\phi\circ\mathcal{O}b_{\mathcal{N}}(x_1, x_2,z)$ and $\mathcal{O}b_{\mathcal{N}'}(\bar{\phi}(x_1), \bar{\phi}(x_2),\phi(z))$ in $\delta^{2}Ob_\phi$. By a straightforward but lengthy computation, we can check that  the remaining terms of
$\delta^{2}Ob_\phi+\phi\circ Ob_{\mathcal{N}}-Ob_{\mathcal{N}'}\circ(\bar{\phi}^{\otimes 2}\wedge\phi)$ are written as follows
\begin{equation}\label{eq1}
\begin{array}{llll}
&-&\widetilde{\sum}[\phi_{l_{1}}(x_{1}^{1}),\ldots,\phi_{l_{n-1}}(x_{1}^{n-1}),
[\phi_{q_{1}}(x_{2}^{1}),\ldots,\phi_{q_{n-1}}(x_{2}^{n-1}),
\phi_{q_{n}}(z)]'_{\alpha}]'_{j}\\[0.5pt]
&+&\widetilde{\sum}[\phi_{l_{1}}(x_{2}^{1}),\ldots,\phi_{l_{n-1}}(x_{2}^{n-1}),
[\phi_{q_{1}}(x_{1}^{1}),\ldots,\phi_{q_{n-1}}(x_{2}^{n-1}),\phi_{q_{n}}(z)]'_{\alpha}]'_{j}\\[0.5pt]
&+&\widetilde{\sum}\sum\limits_{i=1}^{n-1}[\phi_{l_{1}}(x_{2}^{1}),\ldots,\phi_{l_{i}}(x_{2}^{i-1}),
[\phi_{q_{1}}(x_{1}^{1}),\ldots,\phi_{q_{n-1}}(x_{1}^{n-1}),
\phi_{q_{n}}(x_{2}^{i})]_{\alpha}',\ldots,\phi_{l_{n-1}}(x_{2}^{n-1}),\phi_{l_{n}}(z)]'_{j}
\end{array}
\end{equation}
with $$\widetilde{\sum}=\sum\limits_{1\leq l_{1}+\ldots,l_{n-1}+q_{1}+\ldots+q_{n}\leq N}
+\sum\limits_{\begin{subarray}{l}l_{1}+\ldots+l_{n}=N+1\\~l_{n}=q_{1}+\ldots+q_{n}\\~~~l_{i}>0,\alpha=j=0\end{subarray}}
+\sum\limits_{\begin{subarray}{l}q_{1}+\ldots+q_{n}=N+1\\~~~~l_{i}=j=\alpha=0\\~~~~1\leq i \leq n-1\end{subarray}}.
$$
Thanks to Nambu identity, we deduce that expression (\ref{eq1}) vanishes. Thus $\mathcal{O}b\in Z^{3}(\phi,\phi)$.
One has moreover
$$\delta^{2}([\cdot ,\ldots,\cdot ]_{N+1},[\cdot ,\ldots,\cdot ]'_{N+1},\phi_{N+1})=\mathcal{O}b.$$
Then, the $N$-order formal deformation extends to a $N+1$-order formal deformation
whenever $\mathcal{O}b$ 
is a coboundary.
\end{proof}
\begin{cor}
If $H^{3}(\phi,\phi)=0$, then every infinitesimal deformation can be extended
to a formal deformation of larger order.
\end{cor}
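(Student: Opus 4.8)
The plan is to argue by induction on the order $N$, using the preceding Theorem as the inductive step. I start from the given infinitesimal deformation $\Theta_t$ of $\phi$: by definition an infinitesimal deformation is a deformation of order $1$, and by Proposition~\ref{nissma} its linear coefficient $\theta_1$ is a $2$-cocycle in $C^{2}(\phi,\phi)$. The aim is to construct, for every $N\ge 1$, a deformation $\Theta^{(N)}_t$ of order $N$ with $\Theta^{(1)}_t=\Theta_t$ and $\Theta^{(N+1)}_t\equiv\Theta^{(N)}_t\pmod{t^{N+1}}$; assembling the coefficients of all the $\Theta^{(N)}_t$ then produces a formal deformation of full order whose infinitesimal is $\theta_1$, which is the deformation asserted by the corollary.

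For the inductive step I would assume $\Theta^{(N)}_t$ has been built and consider its obstruction $\mathcal{O}b=(\mathcal{O}b_{\mathcal N},\mathcal{O}b_{\mathcal N'},\mathcal{O}b_\phi)$. By the Theorem, $\mathcal{O}b\in Z^{3}(\phi,\phi)$. Since $H^{3}(\phi,\phi)=Z^{3}(\phi,\phi)/B^{3}(\phi,\phi)=0$, the cocycle $\mathcal{O}b$ is automatically a coboundary, so $\mathcal{O}b\in B^{3}(\phi,\phi)$. By the ``if'' direction of the Theorem, $\Theta^{(N)}_t$ then extends to order $N+1$; concretely, using the identity $\delta^{2}\big([\cdot,\ldots,\cdot]_{N+1},[\cdot,\ldots,\cdot]'_{N+1},\phi_{N+1}\big)=\mathcal{O}b$ recorded in the proof of the Theorem, one chooses a $2$-cochain $\theta_{N+1}=([\cdot,\ldots,\cdot]_{N+1},[\cdot,\ldots,\cdot]'_{N+1},\phi_{N+1})\in C^{2}(\phi,\phi)$ with coboundary $\mathcal{O}b$ and sets $\Theta^{(N+1)}_t:=\Theta^{(N)}_t+t^{N+1}\theta_{N+1}$. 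This closes the induction, and passing to the formal power series limit gives the desired formal deformation.

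I do not expect a genuine obstacle here: the corollary is a formal consequence of ``every obstruction is a $3$-cocycle'' (the Theorem) together with the hypothesis $H^{3}(\phi,\phi)=0$. The one point I would be careful to spell out is that the Theorem must be re-invoked at each order --- first so that the obstruction of the freshly constructed order-$N$ deformation is known to be a cocycle (which is precisely what allows the vanishing of $H^{3}$ to conclude that it is a coboundary), and then to turn ``$\mathcal{O}b$ is a coboundary'' into the existence of the order-$(N+1)$ extension. It is also worth noting that $\theta_{N+1}$ is determined only up to a $2$-cocycle, so the extension is highly non-unique, but this does not affect the existence statement.
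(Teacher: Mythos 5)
Your argument is correct and is exactly the standard induction that the paper intends: the corollary is stated without proof as an immediate consequence of the obstruction theorem, and your iteration of that theorem (obstruction is a $3$-cocycle, hence a coboundary when $H^{3}(\phi,\phi)=0$, hence the deformation extends) is the expected reasoning.
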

\section{Examples}
In this section, in order to illustrate the theory, we provide some examples of  cohomology group computations and deformations. The calculations are done with the help of the computer software Mathematica.
\paragraph{Example$1.$}
We consider two $4$-dimensional $3$-Lie algebras $(A,[.,.,.]_{1})$ (resp. $ (B,[.,.,.]_{2})$) given in \cite{JAS,KA})
defined with respect to the basis $(e_{i})_{1\leq i\leq4}$ (resp. $(f_{i})_{1\leq i\leq4}$)  by
 $$[e_{1},e_{2},e_{3}]_{1}=e_{2}, \quad
[e_{1},e_{3},e_{4}]_{1}=e_{4}.$$
and
$$[f_{1},f_{2},f_{4}]_{2}=f_{3}, \quad [f_{1},f_{3},f_{4}]_{2}=f_{3}.$$
Straightforward calculation shows that  $\mathrm{dim} H^{2}(A,A)=3$
and spanned by the 2-cocycles
$$
\left\{\begin{array}{llll}
\psi_{1,1}(e_{1},e_{2},e_{3})=e_{1}-e_{3}\\
\psi_{1,1}(e_{1},e_{2},e_{4})=e_{4}\\
\psi_{1,1}(e_{1},e_{3},e_{4})=0\\
\psi_{1,1,}(e_{2},e_{3},e_{4})=e_{4},
\end{array}\right.
\quad
\left\{\begin{array}{llll}
\psi_{1,2}(e_{1},e_{2},e_{3})=e_{2}\\
\psi_{1,2}(e_{1},e_{2},e_{4})=0\\
\psi_{1,2}(e_{1},e_{3},e_{4})=0\\
\psi_{1,2}(e_{2},e_{3},e_{4})=0,
\end{array}\right.
\quad
\left\{\begin{array}{llll}
\psi_{1,3}(e_{1},e_{2},e_{3})=0\\
\psi_{1,3}(e_{1},e_{2},e_{4})=e_{2}\\
\psi_{1,3}(e_{1},e_{3},e_{4})=e_{1}-e_{3}\\
\psi_{1,3}(e_{2},e_{3},e_{4})=e_{2}.
\end{array}\right.
$$
Similarly, we  show that 
$H^{2}(B,B)$ is $2$-dimensional and spanned by the $2$-cocycles
$$
\left\{\begin{array}{llll}
\psi_{2,1}(f_{1},f_{2},f_{3})=0\\
\psi_{2,1}(f_{1},f_{2},f_{4})=f_{2}\\
\psi_{2,1}(f_{1},f_{3},f_{4})=0\\
\psi_{2,1}(f_{2},f_{3},f_{4})=0,
\end{array}\right.
\quad\hbox{ and }\quad
\left\{\begin{array}{llll}
\psi_{2,2}(f_{1},f_{2},f_{3})=0\\
\psi_{2,2}(f_{1},f_{2},f_{4})=0\\
\psi_{2,2}(f_{1},f_{3},f_{4})=f_{2}\\
\psi_{2,2}(f_{2},f_{3},f_{4})=0.
\end{array}\right.
$$
Now, we compute first all  the $3$-Lie algebra morphisms $\phi_{1,2}:A\rightarrow B$.
The morphism $\phi_{1,2}$ is wholly determined by a set of structure constants $\lambda_{i,j}$,
where 
$\phi_{1,2}(e_{j})=\sum\limits_{j=1}^{4}\lambda_{i,j}f_{i}.$
It turns out that they are defined by\\
$$\small{\left\{\begin{array}{ll}
\phi_{1,2}(e_{1})=\lambda_{1,1}f_{1}+\lambda_{2,1}f_{2}+\lambda_{3,1}f_{3}+\lambda_{4,1}f_{4}\\
\phi_{1,2}(e_{2})=0\\
\phi_{1,2}(e_{3})=\lambda_{3,1}f_{1}+\lambda_{2,3}f_{2}+\lambda_{3,3}f_{3}+\lambda_{4,3}f_{4}\\
\phi_{1,2}(e_{4})=0
\end{array}\right.~~ \hbox{ or } ~~\left\{\begin{array}{ll}
\phi_{1,2}(e_{1})=\lambda_{1,1}f_{1}+\lambda_{2,1}f_{2}+\lambda_{3,1}f_{3}+\lambda_{4,1}f_{4}\\
\phi_{1,2}(e_{2})=0\\
\phi_{1,2}(e_{3})=\lambda_{1,3}f_{1}+\lambda_{2,3}f_{2}+\lambda_{3,3}f_{3}+\frac{\lambda_{1,3}
\lambda_{4,1}-1}{\lambda_{1,1}}f_{4}\\
\phi_{1,2}(e_{4})=\lambda_{3,4}f_{3}
\end{array}\right.}$$

$$\small{\hbox{ or }~~\left\{\begin{array}{ll}
\phi_{1,2}(e_{1})=\lambda_{1,1}f_{1}+\lambda_{2,1}f_{2}+\lambda_{3,1}f_{3}+\lambda_{4,1}f_{4}\\
\phi_{1,2}(e_{2})=\lambda_{3,2}f_{3}\\
\phi_{1,2}(e_{3})=\lambda_{1,3}f_{1}+\lambda_{2,3}f_{2}+\lambda_{3,3}f_{3}+
\frac{\lambda_{1,3}\lambda_{4,1}+1}{\lambda_{1,1}}f_{4}\\
\phi_{1,2}(e_{4})=0
\end{array}\right. ~ ~\hbox{ or }~~
\left\{\begin{array}{ll}
\phi_{1,2}(e_{1})=\lambda_{2,1}f_{2}+\lambda_{3,1}f_{3}+\lambda_{4,1}f_{4}\\
\phi_{1,2}(e_{2})=\lambda_{3,2}f_{3}\\
\phi_{1,2}(e_{3})=-\frac{1}{\lambda_{4,1}}f_{1}+\lambda_{2,3}f_{2}+\lambda_{3,3}f_{3}+\lambda_{4,3}f_{4}\\
\phi_{1,2}(e_{4})=0
\end{array}\right.} $$
$$\small{\hbox{ or }~~\left\{\begin{array}{ll}
\phi_{1,2}(e_{1})=\lambda_{2,1}f_{2}+\lambda_{3,1}f_{3}+\lambda_{4,1}f_{4}\\
\phi_{1,2}(e_{2})=0\\
\phi_{1,2}(e_{3})=\frac{1}{\lambda_{4,1}}f_{1}+\lambda_{2,3}f_{2}+\lambda_{3,3}f_{3}+\lambda_{4,3}f_{4}\\
\phi_{1,2}(e_{4})=\lambda_{3,4}f_{3}
\end{array}\right.} $$
By a direct computation, using Mathematica, we deduce  that
the first space of cocycles
$Z^{1}(A,B)$ of the first morphism with
$\phi_{1,2}(e_{1})=\phi_{1,2}(e_{3})$ is generated by
$$\left\{\begin{array}{llllll}
\rho_{k}(e_{2})=\rho_{k}(e_{4})=0,&\hbox{ for } k\in\{1,\ldots,8\},\\
\rho_{k}(e_{3})=0, ~~\rho_{k}(e_{1})=f_{k},&\hbox{ for } k\in\{1,\ldots,4\},\\
\rho_{k}(e_{1})=0, ~~\rho_{k}(e_{3})=f_{k-4},&\hbox{ for } k\in\{5,\ldots,8\}.
\end{array}\right.$$
Moreover, we have  $H^{1}(A,B)$ is $8$-dimensional.
\paragraph{Example $2.$}
In this example, we consider the $3$-Lie algebra $A$ of the previous example and $B$ defined as 
$$ [f_{1},f_{3},f_{4}]_{2}=f_{2}.$$
By a direct computation, we obtain
$\mathrm{dim} H^{2}(B,B)=3$
and it is spanned by the $2$-cocycles
$$
\left\{\begin{array}{llll}
\psi_{1,1}(f_{1},f_{2},f_{3})=0\\
\psi_{1,1}(f_{1},f_{2},f_{4})=0\\
\psi_{1,1}(f_{1},f_{3},f_{4})=f_{1}\\
\psi_{1,1,}(f_{2},f_{3},f_{4})=0,
\end{array}\right.
\quad
\left\{\begin{array}{llll}
\psi_{1,2}(f_{1},f_{2},f_{3})=0\\
\psi_{1,2}(f_{1},f_{2},f_{4})=0\\
\psi_{1,2}(f_{1},f_{3},f_{4})=f_{3}-f_{4}\\
\psi_{1,2}(f_{2},f_{3},f_{4})=0,
\end{array}\right.
\quad
\left\{\begin{array}{llll}
\psi_{1,3}(f_{1},f_{2},f_{3})=0\\
\psi_{1,3}(f_{1},f_{2},f_{4})=0\\
\psi_{1,3}(f_{1},f_{3},f_{4})=0\\
\psi_{1,3}(f_{2},f_{3},f_{4})=f_{1}.
\end{array}\right.
$$
Now, we consider  the $3$-Lie algebra morphism  defined as
$$\small{\left\{\begin{array}{lllll}
\phi(e_{1})=\lambda_{1,1}f_{1}+\lambda_{2,1}f_{2}+\lambda_{3,1}f_{3}+\lambda_{4,1}f_{4}\\
\phi(e_{3})=\lambda_{1,3}f_{1}+\lambda_{2,3}f_{2}+\lambda_{3,3}f_{3}+\lambda_{4,3}f_{4}\\
\phi(e_{2})=\phi(e_{4})=0.
\end{array}\right.}
$$
By a direct computation, using  we deduce that
the space of 1-cocycles $Z^{1}(A,B)$ related to the morphism $\phi$ is generated by
$$\left\{\begin{array}{llllll}
\rho_{k}(e_{1})=f_{k},&\hbox{ for } k\in\{1,\ldots,4\},&\hbox{ and for } k\in I\setminus \{1,\ldots,4\}&~~~\rho_{k}(e_{1})=0 \\
\rho_{k}(e_{2})=f_{k-4},&\hbox{ for } k\in\{5,\ldots,8\},&\hbox{ and for } k\in I\setminus \{5,\ldots,8\}&~~~\rho_{k}(e_{2})=0 \\
\rho_{k}(e_{3})=f_{k-8},&\hbox{ for } k\in\{9,\ldots,12\},&\hbox{ and for } k\in I\setminus \{9,\ldots,12\}&~~~\rho_{k}(e_{3})=0 \\
\rho_{k}(e_{4})=f_{k-12},&\hbox{ for } k\in\{13,\ldots,16\},&\hbox{ and for } k\in I\setminus \{13,\ldots,16\}&~~~\rho_{k}(e_{4})=0, \\
\end{array}\right.$$
where  $I=\{1,\ldots,16\}$. Hence $H^{1}(A,B)$ is $16$-dimensional.
\paragraph{Example $3.$}
We consider two $4$-dimensional $3$-Lie algebras $(A,[.,.,.]_{A})$ ~$(resp. (B,[.,.,.]_{B}))$
defined in \cite{RGY} with respect to the basis $(e_{i})_{1\leq i\leq4}$ (resp. $(f_{i})_{1\leq i\leq4}$)  by
$$[e_{2}, e_{3},e_{4}]_{A} = e_{1}
(\text{ resp. } \;  [f_{2},f_{3},f_{4}]_{B}=f_{1}, [f_{1},f_{3},f_{4}]_{B}=f_{2},[f_{1},f_{2},f_{4}]_{B}=f_{3},
[f_{1},f_{2},f_{3}]_{B}=f_{4}).$$
We have 
$\mathrm{dim} H^{2}(A, A)=9$
and the space is spanned by the $2$-cocycles (here $I=\{1,\ldots,6\}$)
$$
\left\{\begin{array}{llllll}
\psi_{1,k}(e_{1},e_{2},e_{4})=e_{k},&\hbox{ for } k=3,&\hbox{ and for } k\in I\setminus \{2\}&\psi_{1,k}(e_{1},e_{2},e_{4})=0 \\
\psi_{1,k}(e_{1},e_{3},e_{4})=e_{k},&\hbox{ for } k=1,2,&\hbox{ and for } k\in I\setminus \{1,2\}&\psi_{1,k}(e_{1},e_{3},e_{4})=0 \\
\psi_{1,k}(e_{2},e_{3},e_{4})=e_{k-1},&\hbox{ for } k=4,&\hbox{ and for } k\in I\setminus \{4\}&\psi_{1,k}(e_{2},e_{3},e_{4})=0 \\
\psi_{1,k}(e_{1},e_{2},e_{3})=(6-k)e_{k-4}+(k-5)e_{k-2},&\hbox{ for } k=5,6,&\hbox{ and for } k\in I\setminus \{5,6\}&\psi_{1,2}(e_{1},e_{2},e_{3})=0,
\end{array}\right.
$$
$$
\left\{\begin{array}{llll}
\psi_{1,7}(e_{1},e_{2},e_{4})=e_{2}\\
\psi_{1,7}(e_{1},e_{3},e_{4})=-e_{3}\\
\psi_{1,7}(e_{2},e_{3},e_{4})=0\\
\psi_{1,7}(e_{1},e_{2},e_{3})=0
\end{array}\right.
\quad
\left\{\begin{array}{llll}
\psi_{1,8}(e_{1},e_{2},e_{4})=0\\
\psi_{1,8}(e_{1},e_{3},e_{4})=e_{4}\\
\psi_{1,8}(e_{2},e_{3},e_{4})=0\\
\psi_{1,8}(e_{1},e_{2},e_{3})=e_{2}
\end{array}\right.\quad
\left\{\begin{array}{llll}
\psi_{1,9}(e_{1},e_{2},e_{4})=-e_{4}\\
\psi_{1,9}(e_{1},e_{3},e_{4})=0\\
\psi_{1,9}(e_{2},e_{3},e_{4})=0\\
\psi_{1,9}(e_{1},e_{2},e_{3})=e_{3}
\end{array}\right.
$$
One can check that any 2-cocycle of the 
$3$-Lie algebra $B$ is a coboundary, 
hence $\mathrm{dim} H^{2}(B, B)=0$.\\
Now,  we  construct the $3$-Lie algebra morphisms $\phi: A\rightarrow B$. It  turns out that they are defined as
$$\small{\left\{\begin{array}{ll}
\phi(e_{1})=\phi(e_{2})=0,\\
\phi(e_{3})=\lambda_{4,3}f_{4}\\
\phi(e_{4})=\lambda_{2,4}f_{2}+\lambda_{2,4}f_{3}+\lambda_{4,4}f_{4}.
\end{array}\right.}
$$
Similarly, one  checks
that $H^{1}(A,B)$ is spanned by (here   $I=\{1,\ldots,16\}$)
$$\left\{\begin{array}{llllll}
\rho_{k}(e_{1})=f_{k},&\hbox{ for } k\in\{2,\ldots,4\},&\hbox{ and for } k\in I\setminus \{2,\ldots,4\}&~~~\rho_{k}(e_{1})=0 \\
\rho_{k}(e_{2})=f_{k-4},&\hbox{ for } k\in\{5,\ldots,8\},&\hbox{ and for } k\in I\setminus \{5,\ldots,8\}&~~~\rho_{k}(e_{2})=0 \\
\rho_{k}(e_{3})=f_{k-8},&\hbox{ for } k\in\{9,\ldots,12\},&\hbox{ and for } k\in I\setminus \{9,\ldots,12\}&~~~\rho_{k}(e_{3})=0 \\
\rho_{k}(e_{4})=f_{k-12},&\hbox{ for } k\in\{13,\ldots,16\},&\hbox{ and for } k\in I\setminus \{13,\ldots,16\}&~~~\rho_{k}(e_{4})=0. \\
\end{array}\right.$$
In the following, we deal with deformations. We consider two infinitesimal deformations of $[.,.,.]_{A}$:
$$
\begin{array}{lllll}
[e_{2},e_{3},e_{4}]_{A,1,t}=e_{1}+tc_{2}e_{2},\quad~~ [e_{1},e_{2},e_{4}]_{A,1,t}=tz_{2}e_{2},\quad~~
[e_{1},e_{3},e_{4}]_{A,1,t}=t(b_{2}e_{2}-z_{2}e_{3}),\quad~~ [e_{1},e_{2},e_{3}]_{A,1,t}=0,\quad\quad\quad\quad\quad\hfill \\[10pt]
[e_{2},e_{3},e_{4}]_{A,2,t}=e_{1}+tc_{4}e_{4},\quad~~ [e_{1},e_{2},e_{4}]_{A, 2,t}=tz_{2}e_{2},\quad~~
[e_{1},e_{3},e_{4}]_{A,2,t}=-tz_{2}e_{3},\quad~~ [e_{1},e_{2},e_{3}]_{A,2,t}=tk_{4}e_{4}.\quad\quad\quad\quad\quad\quad \quad\hfill
\end{array}
$$
Then, we have  three infinitesimal deformations $([.,.,.]^i_{A,t},[.,.,.]_{B},\phi^i_{t})$ of $\phi$ given by
$$
\begin{array}{llllll}
\left\{\begin{array}{ll}
\phi^1_{t}(e_{1})=\phi^1_{t}(e_{2})=0\\
\phi^1_{t}(e_{3})=(\lambda_{4,3}+tb_{4,3})f_{4}\\
\phi^1_{t}(e_{4})=(\lambda_{2,4}+tb_{2,4})f_{2}+\lambda_{2,4}f_{3}+\lambda_{4,4}f_{4}
\end{array}\right.
&\hbox{ and }~~\left\{\begin{array}{llll}
\  [e_{2},e_{3},e_{4}]^1_{A,t}=e_{1}+tc_{2}e_{2}\\
 \  [e_{1},e_{2},e_{4}]^1_{A,t}=[e_{1},e_{3},e_{4}]^1_{A,t}=0\\
\ [e_{1},e_{2},e_{3}]^1_{A,t}=0
\end{array}\right.\\[20pt]
\left\{\begin{array}{ll}
\phi^2_{t}(e_{1})=c_{4}t(-\lambda_{2,4}f_{2}-\lambda_{2,4}f_{3}-\lambda_{4,4}f_{4})\\
\phi^2_{t}(e_{2})=t(b_{2,2}f_{2}+b_{2,2}f_{3}+b_{4,2}f_{4})\\
\phi^2_{t}(e_{3})=t(b_{1,3}f_{1}+b_{2,3}f_{2}+b_{3,3}f_{3})+(\lambda_{4,3}tb_{4,3})f_{4}\\
\phi^2_{t}(e_{4})=tb_{1,4}f_{1}+(\lambda_{2,4}+tb_{2,4})f_{2}+(\lambda_{2,4}+tb_{2,4})f_{3}+
(\lambda_{4,4}+tb_{4,4})f_{4}\end{array}\right.
&\hbox{ and }~~\left\{\begin{array}{llll}
\ [e_{2},e_{3},e_{4}]^2_{A,t}=e_{1}+tc_{4}e_{4}\\
\ [e_{1},e_{2},e_{4}]^2_{A,t}=[e_{1},e_{3},e_{4}]^2_{A,t}=0\\
\ [e_{1},e_{2},e_{3}]^2_{A,t}=0
\end{array}\right.\\[20pt]
\left\{\begin{array}{ll}
\phi^3_{t}(e_{1})=0\\
\phi^3_{t}(e_{2})=t(b_{2,2}f_{2}+b_{2,2}f_{3}+b_{4,2}f_{4})\\
\phi^3_{t}(e_{3})=t(b_{1,3}f_{1}+b_{2,3}f_{2}+b_{3,3}f_{3})+(\lambda_{4,3}+tb_{4,3})f_{4}\\
\phi^3_{t}(e_{4})=tb_{1,4}f_{1}+(\lambda_{2,4}+tb_{2,4})f_{2}+(\lambda_{2,4}+tb_{3,4})f_{3}+
(\lambda_{4,4}+tb_{4,4})f_{4}\end{array}\right.
&\hbox{ and }~~\left\{\begin{array}{llll}
\ [e_{2},e_{3},e_{4}]^3_{A,t}=e_{1}\\
\ [e_{1},e_{3},e_{4}]^3_{A,t}=tb_{2}e_{2}\\
\ [e_{1},e_{2},e_{4}]^3_{A,t}=[e_{1},e_{2},e_{3}]^3_{A,t}=0.
\end{array}\right.
\end{array}
$$
Finally, we will construct a formal automorphism $\psi_{A,t}: A[[t]]\rightarrow A[[t]]$ modulo $t^{2}$. By a direct computation, we can  see that 
such  automorphism modulo $t^{2}$  is
$\psi_{A,t}=I_{A}+t
\varphi_{A}$  and $\psi_{A,t}^{-1}=I_{A}-t\varphi_{A}$, where $\varphi_A\in C^{1}(A,A)$ is defined as follows 
$$\left\{\begin{array}{ll}
\varphi_{A}(e_{1})=(b'_{2,2}+b'_{3,3}+b'_{4,4})e_{1}\\
\varphi_{A}(e_{2})=b'_{1,2}e_{1}+b'_{2,2}e_{2}+b'_{3,2}e_{3}+b'_{4,2}e_{4}\\
\varphi_{A}(e_{3})=b'_{1,3}e_{1}+b'_{2,3}e_{2}+b'_{3,3}e_{3}+b'_{4,3}e_{4}\\
\varphi_{A}(e_{4})=b'_{1,4}e_{1}+b'_{2,4}e_{2}+b'_{3,4}e_{3}+b'_{4,4}e_{4}.
\end{array}
\right.
$$
Setting $\psi_{B,t}=I_{B[[t]]}$ and 
$\widetilde{\phi}^1_{t}=\psi_{B,t}\circ\phi^1_t\circ\psi_{A,t}^{-1}$, we get
$$\left\{\begin{array}{ll}
\widetilde{\phi}^{1}_{t}(e_{1})=\widetilde{\phi}(e_{2})=0\\
\widetilde{\phi}^{1}_{t}(e_{3})=-t(b'_{4,3}\lambda_{2,4}f_{2}+b'_{4,3}\lambda_{2,4}f_{3})+
(\lambda_{4,3}+t(b_{4,3}-\lambda_{4,3}b'_{3,3}-b'_{4,3}\lambda_{4,4}))f_{4}\\
\widetilde{\phi}^{1}_{t}(e_{4})=(\lambda_{2,4}+t(b_{2,4}-b'_{4,4}\lambda_{2,4}))f_{2}+
(\lambda_{2,4}-tb'_{4,4}\lambda_{2,4})f_{3}+(\lambda_{4,4}-t(b'_{3,4}\lambda_{4,3}+b'_{4,4}\lambda_{4,4}))f_{4}.
\end{array}
\right.
$$
Moreover, setting $[.,.,.]_{A,t}^{1'}=\psi_{A,t}\circ[.,.,.]^1_{A,t}\circ(\psi^{-1}_{A,t})^{\otimes 3}$, we get
$$\left\{\begin{array}{ll}
\ [e_{2},e_{3},e_{4}]'_{A,t}=(1+(b'_{3,3}+b'_{4,4})t)e_{1}+c_{2}te_{2}\\
\ [e_{1},e_{2},e_{4}]'_{A,t}=[e_{1},e_{3},e_{4}]'_{A,t}=[e_{1},e_{2},e_{3}]'_{A,t}=0.
\end{array}\right.
$$
Thus, we deduce that $([.,.,.]_{A,t}^{1'},[.,.,.]_{B},\widetilde{\phi}^1_{t})$ is an equivalent infinitesimal deformation of $([.,.,.]^1_{A,t},[.,.,.]_{B},{\phi}^1_{t})$. 
We can construct other  examples of equivalent infinitesimal deformations.

\paragraph{Conclusion.}
We have performed some computations of cohomology groups for 3-Lie algebra morphisms and applications to deformation theory. This is in order to illustrate the new cohomology of 3-Lie algebra morphisms introduced in this paper.


\end{document}